\documentclass[oneside]{article}
\usepackage{amsmath,amssymb,amscd,amsthm,verbatim,alltt,amsfonts,array}
\usepackage{mathrsfs}
\usepackage[english]{babel}
\usepackage{latexsym}
\usepackage{amssymb}
\usepackage{euscript}
\usepackage{graphicx}
\usepackage{arydshln}
\usepackage[dvipsnames]{xcolor}

\setlength{\dashlinedash}{0.2pt}
\setlength{\dashlinegap}{4.5pt}
\setlength{\arrayrulewidth}{0.2pt}
\setlength{\textwidth}{13.9cm}

\newtheorem{theorem}{Theorem}
\theoremstyle{plain}

\newtheorem{conjecture}{Conjecture}
\newtheorem{corollary}{Corollary}

\newtheorem{remark}{Remark}

\numberwithin{equation}{section}

\begin{document}

{\footnotesize%
\hfill
}

  \vskip 1.2 true cm

\begin{center} {\bf Arithmeticity for integral cohomological dimension of configuration spaces of manifolds} \\
          {by}\\
{\sc Muhammad Yameen}
\end{center}

\pagestyle{myheadings}
\markboth{Cohomological dimension of configuration spaces of manifolds}{Muhammad Yameen}

\begin{abstract}
Consider the configuration spaces of manifolds. We give a precise formula for the integral cohomological dimension (the degree of top non-trivial integral cohomology group) of unordered configuration spaces of manifolds with non-trivial co-dimension one cohomology group, and show that the the sequence of cohomological dimensions is arithmetic. This arithmeticity is not present in the classical example of Arnold. Moreover, we show that the top integral cohomology group is infinite. Furthermore, We give a lower bound for the rank of top integral cohomology group. We also predict that the top integral cohomology group of configuration spaces of manifolds with non-trivial co-dimension one cohomology group is eventually finite. To the best of our knowledge, there is no rigorous bound for the cohomological dimension of ordered configuration spaces. As an application of main results, we give a sharp lower bound for the cohomological dimension of ordered configuration spaces of manifolds. The first step of the proof of main results is to define a reduced Chevalley–Eilenberg complex. 
\end{abstract}

\begin{quotation}
\noindent{\bf Key Words}: {Configuration spaces, cohomological dimension, arithmeticity, Chevalley–Eilenberg complex}\\

\noindent{\bf Mathematics Subject Classification}:  Primary 55R80, Secondary 55P62.
\end{quotation}

\thispagestyle{empty}

\section{Introduction}

\label{sec:intro}


For any manifold $M$, let
$$F_{k}(M):=\{(x_{1},\ldots,x_{k})\in M^{k}| x_{i}\neq x_{j}\,for\,i\neq j\}$$
be the configuration space of $k$ distinct ordered points in $M$ with induced topology. The symmetric group $S_{k}$ acts on $F_{k}(M)$ by permuting the coordinates. The quotient $$C_{k}(M):=F_{k}(M)/S_{k} $$
is the unordered configuration space with quotient topology. The geometry and topology of configuration spaces have attracted a important attention, over the years. Configuration spaces is a vibrant area of research of geometry and topology with important applications in several areas including algebraic topology, geometric topology, algebraic geometry, knot theory, geometric group theory, differential topology or homotopy theory. These spaces are cornerstones objects in topology, source of several crucial topological data. For example, when the base manifold if the surface, the fundamental groups are the so-called braid groups which are fundamental in geometric group theory. If $M$ is a manifold of dimension $d$, then the spaces $F_{k}(M)$ and $C_{k}(M)$ are open manifolds of dimension $kd.$ When $M$ is an open surface, then $C_{k}(M)$ is a stein manifold. The manifold $C_{k}(M)$ is orientable if and only if $M$ is orientable and even dimensional.

The central theme in the theory of configuration spaces is the (co)homology of these spaces. The cohomology of these spaces was first studied by Arnold (see \cite{A1} and \cite{A2}) in his investigation of spaces $F_{k}(\mathbb{R}^{2})$ and $C_{k}(\mathbb{R}^{2}).$ The subject was revival and systematically investigated by Cohen and his collaborators. For an orientable manifold $M$, spectral sequence of the inclusion $F_{k}(M)\hookrightarrow  M^{k}$ has been described by Cohen-Taylor and further studied by Totaro \cite{T}. For a smooth complex projective variety $X,$ Fulton and MacPherson \cite{F-M} obtained a compactification of $F_{k}(X).$ Using Morgan's theory they describe a differential graded algebra model for the rational homotopy of $F_{k}(X),$ depending on $H^{*}(X;\mathbb{Q}),$ the number of points $k,$ and the Chern classes of $X.$ Furthermore, Fulton-MacPherson's model was simplified by Kriz \cite{K} (see also \cite{BMP}). For the integral (co)homology, very little is known, except a few cases, which we would like to mention here. In the paper \cite{FZ}, Fiechtner and Zeigler’s describe the integral cohomology algebra of configuration spaces of usual spheres. The integral cohomology of configuration spaces of surfaces was studied by Napolitano in \cite{N2} (see also \cite{N1}). However, there is no explicit method to compute the integral (co)homology of configuration spaces of manifolds. An important question about the configuration space of manifolds is cohomological dimension. The aim of this work is to start the rigorous investigation of top non-trivial integral cohomology of these spaces. We describe how the cohomological dimension of these spaces is affected by increasing the number of points. All manifolds are assumed to be finite type (Betti numbers are finite numbers). The boundary of manifold $M$ is written $\partial M.$ Throughout the paper, $U$ is a closed subset of $M$ such that $\partial M\cap U=\phi,$ and $M-U$ is connected. We denote by $Chdim(M)$ (cohomological dimension of orientable manifold $M$) the smallest integer with the property that 
$$H^{i}(M;\mathbb{Z})=0, \quad \forall \,\,i>Chdim(M).$$ 

\begin{theorem}\label{theoremmain1}
Let $M$ be a compact connected orientable even dimensional manifold . Assume $\partial M\cup U\neq\phi$ and $rank(H^{d-1}(M-U;\mathbb{Z}))\neq0.$ Then, for $k\geq1,$ we have $$ Chdim(C_{k}(M-U)) =(d-1)k.$$
Moreover, $H^{(d-1)k}(C_{k}(M-U);\mathbb{Z})$ is infinite.
\end{theorem}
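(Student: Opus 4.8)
Write $N:=M-U$ and $d:=\dim M$. Passing to the interior by pushing configurations off $\partial M$ along a collar, we may assume $N$ is a connected orientable $d$-manifold without boundary, and $\partial M\cup U\ne\emptyset$ forces $N$ to be non-compact, so that $H^{d}(N;\mathbb Z)=0$ while $\operatorname{rank}H^{d-1}(N;\mathbb Z)\ge1$. Because $d$ is even and $N$ is orientable, $C_k(N)$ is an orientable open manifold of dimension $kd$, and Poincar\'e--Lefschetz duality gives $H^{i}(C_k(N);\mathbb Z)\cong H^{BM}_{kd-i}(C_k(N);\mathbb Z)$; in particular $H^{(d-1)k}(C_k(N);\mathbb Z)\cong H^{BM}_{k}(C_k(N);\mathbb Z)$. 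Thus the equality $Chdim(C_k(N))=(d-1)k$ together with infiniteness of the top group is equivalent to the two statements: (i) $H^{i}(C_k(N);\mathbb Z)=0$ for $i>(d-1)k$, and (ii) $H^{BM}_k(C_k(N);\mathbb Z)$ is nonzero and infinite. The plan is to establish (i) by a degree count in the reduced Chevalley--Eilenberg complex and (ii) by an explicit Borel--Moore cycle of infinite order.

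For (i) I would use the reduced Chevalley--Eilenberg complex computing $H^{*}(C_k(N);\mathbb Z)$ (equivalently, the $E_1$-page of the resulting weight spectral sequence). In weight $k$ its generators are decorated forests: the $k$ points are partitioned into clusters, each cluster being an iterated bracket whose bracket shifts contribute $d-1$ to the cohomological degree per edge, and each cluster carries a decoration by a class of $\bar H^{*}(N)$. The essential structural input, which I would isolate as a lemma from the construction of the complex, is that a cluster of $j$ points carries exactly \emph{one} decoration from $H^{\le d-1}(N)$ (the points of a cluster are infinitesimally close and see a single cohomology class of $N$), so its degree is at most $(d-1)(j-1)+(d-1)=(d-1)j$. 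Since $H^{m}(N;\mathbb Z)=0$ for $m\ge d$ as $N$ is open, summing over clusters of sizes $j_1+\cdots+j_m=k$ bounds the cohomological degree of every generator by $\sum_l (d-1)j_l=(d-1)k$. Hence the complex vanishes in cohomological degrees $>(d-1)k$ in weight $k$, and (i) follows integrally, independently of the differentials.

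For (ii) I would produce a geometric Borel--Moore $k$-cycle. Because $H^{d-1}(N;\mathbb Q)\ne0$, Poincar\'e duality gives $H^{1}_c(N;\mathbb Q)\cong H_{d-1}(N;\mathbb Q)\ne0$, so there are a compactly supported class $\omega\in H^{1}_c(N)$ and a properly embedded oriented line $\gamma\subset N$ with $\int_\gamma\omega\ne0$, i.e.\ $[\gamma]\in H^{BM}_1(N;\mathbb Z)$ is of infinite order. Using that the normal bundle of $\gamma$ has rank $d-1\ge1$, push $\gamma$ off to $k$ pairwise disjoint parallel copies $\gamma_1,\dots,\gamma_k$, each still of infinite order. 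Their product $\Gamma=\gamma_1\times\cdots\times\gamma_k$ is a properly embedded copy of $\mathbb R^{k}$ inside $F_k(N)$ (disjointness guarantees it lands in the configuration space) which descends to a Borel--Moore $k$-cycle $\bar\Gamma$ in $C_k(N)$. Pairing $\bar\Gamma$ with the $S_k$-symmetrisation of $p_1^{*}\omega\smile\cdots\smile p_k^{*}\omega\in H^{k}_c(F_k(N))$ yields, up to a nonzero combinatorial constant, $\big(\int_\gamma\omega\big)^{k}\ne0$ over $\mathbb Q$. Hence $[\bar\Gamma]$ has infinite order in $H^{BM}_k(C_k(N);\mathbb Z)\cong H^{(d-1)k}(C_k(N);\mathbb Z)$, which proves the group is nonzero, so $Chdim(C_k(N))\ge(d-1)k$, and infinite.

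Combining (i) and (ii) gives $Chdim(C_k(N))=(d-1)k$ together with infiniteness of the top group; the base case $k=1$ reduces to $Chdim(N)=d-1$, which is exactly the hypothesis $H^{d-1}(N;\mathbb Z)\ne0$ against $H^{d}(N;\mathbb Z)=0$. I expect the main obstacle to be the upper bound (i): one must verify that the reduced Chevalley--Eilenberg complex is valid over $\mathbb Z$ and that the decoration-collapse lemma holds integrally. A naive attempt to realise the top class by placing the generator of $H^{d-1}(N)$ on each of the $k$ points fails, since these classes are odd-dimensional and their exterior product vanishes once $k$ exceeds $\operatorname{rank}H^{d-1}(N)$. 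The geometric Borel--Moore cycle circumvents this sign issue entirely, parallel clusters replacing vanishing exterior products, which is why I route the lower bound through duality rather than through the algebraic model.
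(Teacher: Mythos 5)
Your proposal has genuine gaps in both halves, and in each case the gap is exactly the point where the paper takes a different route. For the upper bound (i), you rest everything on the claim that a reduced Chevalley--Eilenberg complex computes $H^{*}(C_k(N);\mathbb Z)$ and that your cluster-degree count is valid integrally. No such integral model exists: the Chevalley--Eilenberg/Knudsen complex (the one used in this paper, following F\'elix--Thomas and Knudsen) computes \emph{rational} (co)homology only, and configuration spaces have abundant torsion invisible to it. You flag this as ``the main obstacle'' but offer no resolution, so the integral vanishing $H^{i}(C_k(N);\mathbb Z)=0$ for $i>(d-1)k$ --- which is what $Chdim$ requires --- is unproven in your argument. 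The paper gets this integrally for free from Kallel's theorem ($Chdim(C_k(M-U))\le (d-1)k-r$ with $r\ge 0$, an honest integral statement proved via symmetric products and duality), and uses the rational complex only for the lower bound.

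For the lower bound (ii), your geometric cycle $\bar\Gamma$ is a nice idea, but the non-vanishing argument fails at the pairing step. First, $p_1^{*}\omega\smile\cdots\smile p_k^{*}\omega$ does not define a class in $H^{k}_c(F_k(N))$: compactly supported cohomology is covariant for open inclusions (extension by zero), not contravariant, so $\omega^{\times k}\in H^k_c(N^k)$ cannot be restricted to the open subset $F_k(N)$, nor pulled back along the non-proper projections $p_i$. Second, even formally, the $S_k$-symmetrisation you pair against is \emph{zero}: $\omega$ has odd degree $1$, so transpositions act on $\omega^{\times k}$ by $-1$ and symmetrising kills it for $k\ge 2$ --- the sign problem you claim to circumvent reappears precisely here, and the asserted value $\bigl(\int_\gamma\omega\bigr)^{k}$ is actually $0$. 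What your construction really reduces to is showing that the restriction map $H^{BM}_k(N^k)\to H^{BM}_k(F_k(N))$ does not kill $[\gamma]^{\times k}$ (the legitimate pairing $\langle\omega^{\times k},[\gamma]^{\times k}\rangle\ne 0$ lives on $N^k$, not on the configuration space); controlling this restriction means controlling classes supported on the fat diagonal, and that step is entirely missing. The paper sidesteps all of this algebraically: in the complex $(\Omega^{*,*}_k(M-U),\partial)$ for the open manifold, the top-degree pieces $\mathrm{J}^{k/2}$ (for $k$ even) and $\mathrm{I}\mathrm{J}^{(k-1)/2}$ (for $k$ odd) are closed and cannot bound since nothing lives above degree $(d-1)k$, giving $H_{(d-1)k}(C_k(M-U);\mathbb Q)\neq 0$ immediately, hence positive rank, hence infiniteness of the integral group.
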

\begin{theorem}\label{theoremmain2}
Let $M$ be a closed connected orientable even dimensional manifold. Assume $rank(H^{d-1}(M;\mathbb{Z}))\neq0,1.$ Then, for $k\geq3,$ we have $$ Chdim(C_{k}(M)) =(d-1)k+1.$$
Moreover, $H^{(d-1)k+1}(C_{k}(M);\mathbb{Z})$ is infinite.
\end{theorem}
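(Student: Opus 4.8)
The plan is to compute the cohomology one field at a time through a Chevalley–Eilenberg model and then assemble the integral statement with the universal coefficient theorem. Over a field $\mathbb{F}$ the total homology $\bigoplus_{k} H_{*}(C_{k}(M);\mathbb{F})$ is the Chevalley–Eilenberg homology of a graded Lie algebra $\mathfrak{g}_{M}$ manufactured from $H_{*}(M;\mathbb{F})$; since $d$ is even, $\mathfrak{g}_{M}$ is free and its Chevalley–Eilenberg complex is the free graded-commutative algebra on $\mathfrak{g}_{M}$, with differential induced by the Browder bracket. I grade this complex by \emph{weight} $k$ (the number of points) and by cohomological degree, so that the computation splits weight by weight. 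The weight-one generators form a copy of $H_{*}(M;\mathbb{F})$ in degrees $0,\dots,d$; in particular the codimension-one cohomology $H^{d-1}(M)$, of rank $b:=rank\,H^{d-1}(M;\mathbb{Z})$, contributes $b$ generators of degree $d-1$, while for closed $M$ the fundamental class $[M]\in H_{d}(M)$ contributes the unique generator of degree $d$.

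The reduced Chevalley–Eilenberg complex is obtained by discarding every generator that cannot enter a monomial of maximal cohomological degree. The mechanism is a slope estimate: assign to a class of degree $p$ and weight $w$ the ratio $p/w$, and observe that the Browder bracket strictly decreases this ratio, so the only contributions of ratio at least $d-1$ come from the degree-$(d-1)$ weight-one generators and from the degree-$d$ fundamental class. Consequently a weight-$k$ monomial has degree at most $(d-1)k$, except that a single factor of the fundamental class raises the bound to $(d-1)k+1$; the fundamental class can occur at most once because its higher powers bound, which is exactly the collapse of the diagonal that keeps the slope equal to $d-1$ rather than $d$. This degree bound is insensitive to the characteristic, since the relevant generators live in the same degrees over every prime field even though their number may jump, so $H^{i}(C_{k}(M);\mathbb{F})=0$ for all $i>(d-1)k+1$ and all $\mathbb{F}$. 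As $C_{k}(M)$ is of finite type, the universal coefficient theorem upgrades this to $H^{i}(C_{k}(M);\mathbb{Z})=0$ for $i>(d-1)k+1$, giving the upper bound $Chdim(C_{k}(M))\le(d-1)k+1$. The presence of the fundamental class, available only because $M$ is closed, is precisely what separates this bound from the value $(d-1)k$ of the open case in Theorem~\ref{theoremmain1}.

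For the matching lower bound I work over $\mathbb{Q}$ and produce a nonzero class in weight $k$ and degree $(d-1)k+1$. Removing a point gives an open submanifold $M-\{pt\}$ to which Theorem~\ref{theoremmain1} applies (with $U=\{pt\}$), furnishing a nonzero class in $H^{(d-1)(k-1)}(C_{k-1}(M-\{pt\});\mathbb{Q})$; multiplying it by the fundamental-class generator of the extra point, via the product $C_{k-1}\times C_{1}\to C_{k}$, lands in weight $k$ and degree $(d-1)(k-1)+d=(d-1)k+1$. Equivalently, in the reduced complex this class is a monomial in the even weight-two generators of degree $2(d-1)$ together with one fundamental-class factor. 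It remains to check that the class survives: it is a cycle because its differential would have smaller slope, and one must show it is not a boundary. This non-bounding is the crux of the argument, and it is here that the hypotheses enter — the closed-manifold differential carries an extra term built from the diagonal class of $M$, and $rank\,H^{d-1}(M;\mathbb{Z})\ge 2$ together with $k\ge 3$ is exactly what guarantees that this term cannot cancel the top line (when the rank is $0$ or $1$ the corresponding monomials either do not exist or are forced to bound). Granting the survival, $H^{(d-1)k+1}(C_{k}(M);\mathbb{Q})\neq 0$, so $H^{(d-1)k+1}(C_{k}(M);\mathbb{Z})$ has positive rank; this makes the group infinite and, combined with the vanishing above, pins down $Chdim(C_{k}(M))=(d-1)k+1$.
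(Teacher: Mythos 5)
Your proposal contains two genuine gaps, one in each half of the argument.

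For the lower bound, you identify the right candidate classes (monomials in the degree-$2(d-1)$ generators times one factor of the fundamental class $v_{d}$), but you explicitly defer the decisive step: ``one must show it is not a boundary \dots Granting the survival.'' That deferred step is exactly where the paper does its work, and it does not follow ``automatically'' from the hypotheses; it needs the reduced complex of Section 3 and a concrete two-case argument. After quotienting by the acyclic subcomplex $\Omega^{*,*}_{k-2}(M)\cdot(v_{d}^{2},w_{2d-1})$ (Theorem \ref{Acyclic}), the candidates $v_{d}\text{J}^{\frac{k-1}{2}}$ (odd $k$) and $\text{I}v_{d}\text{J}^{\frac{k}{2}-1}$ (even $k$) become cycles, and non-bounding is then proved as follows: for odd $k$ any boundary would have to come from weight $\frac{k+1}{2}$, which exceeds the bound $\lfloor k/2\rfloor$ on the weight grading, so that bidegree is zero and every class survives; for even $k$ the potential boundaries come from $\text{J}^{\frac{k}{2}}={}^{r}\Omega^{(d-1)k,\frac{k}{2}}_{k}(M)$ and survival is a dimension count, $n\binom{\frac{k}{2}+n-2}{n-1}>\binom{\frac{k}{2}+n-1}{n-1}$, which holds precisely because $n\geq 2$ --- this is where the hypothesis $rank(H^{d-1}(M;\mathbb{Z}))\neq 0,1$ enters. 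Your sentence that the rank hypothesis ``is exactly what guarantees that this term cannot cancel the top line'' is the statement to be proved, not a proof of it. (A side remark: for closed $M$ there is no map $C_{k-1}(M)\times C_{1}(M)\to C_{k}(M)$, since the added point cannot be kept disjoint; the multiplication by $v_{d}$ you want is purely algebraic, internal to the CE complex.)

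For the upper bound, running the CE model ``one field at a time'' and then applying universal coefficients does not work as stated. The complex $(\Omega^{*,*}_{k}(M),D)$ computes $H^{*}(C_{k}(M);\mathbb{Q})$ only; in positive characteristic the homology of configuration spaces of even-dimensional manifolds is \emph{not} given by this complex, and your claim that ``the relevant generators live in the same degrees over every prime field even though their number may jump'' is false. For example $H^{*}(C_{k}(\mathbb{R}^{2});\mathbb{F}_{2})$ is nonzero in degrees up to $k-\alpha(k)$, where $\alpha(k)$ is the number of ones in the binary expansion of $k$ (Fuchs), while rationally only degrees $0$ and $1$ survive: the extra mod-$p$ classes arise from Dyer--Lashof operations and occupy genuinely new degrees, so bounding their slopes is a substantial piece of work that your sketch does not contain. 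The paper sidesteps this entirely: the integral upper bound $Chdim(C_{k}(M))\leq (d-1)k-r+1\leq (d-1)k+1$ is quoted from Kallel's theorem (Theorem \ref{Kallel}), which is proved by duality and symmetric-product methods and holds with $\mathbb{Z}$ coefficients. To repair your argument you need either to invoke that result or to supply a genuinely characteristic-free bound; the rational CE model alone cannot do it.
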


\begin{remark} 
Arnold proved the repetition property of $H^{*}(C_{k}(\mathbb{R}^{2});\mathbb{Z}):$
$$H^{i}(C_{2k+1}(\mathbb{R}^{2});\mathbb{Z})=H^{i}(C_{2k}(\mathbb{R}^{2});\mathbb{Z}).$$ For small values of $k,$ Napolitano computed the cohomological dimensions of configuration spaces surfaces with low genus (see Tables 1, 2, 3 and 4 of \cite{N2}). It seems that the behavior of cohomological dimensions of configuration spaces of manifold $M$ with condition $rank(H^{d-1}(M;\mathbb{Z}))= 0$ is not arithmetic.
\end{remark}

 The behavior of the sequence of cohomological dimensions of configuration spaces of manifold $M$ with condition $rank(H^{d-1}(M;\mathbb{Z}))= 0$ is unclear, and we ask the following.\\\\
\textbf{Question.} If $M$ is an orientable even dimensional manifold with condition $rank(H^{d-1}(M;\mathbb{Z}))= 0$, then what is the pattern in the sequence $\{Chdim(C_{k}(M))\}_{k\geq1}$?
\begin{theorem}\label{theoremmain3}
Let $M$ be a compact orientable even dimensional manifold. Assume $k\geq 2,$ $\partial M \cup U\neq\phi$ and $rank(H^{d-1}(M-U;\mathbb{Z}))=n>0.$ Then the group $H^{(d-1)k}(C_{k}(M-U);\mathbb{Z})$ is infinite. More precisely, the rank of $H^{(d-1)k}(C_{k}(M-U);\mathbb{Z})$ is at least
$$\qquad\binom{\frac{k}{2}+n-1}{n-1}\qquad\text{if $k$ is even }, $$
$$n\binom{\frac{k-1}{2}+n-1}{n-1}\qquad \text{if $k$ is odd}.$$
\end{theorem}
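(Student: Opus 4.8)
The plan is to read off the claimed classes directly from the reduced Chevalley–Eilenberg complex and to show that, in the top cohomological degree, the vanishing of $H^{d}(M-U;\mathbb{Z})$ turns them into permanent cocycles that are manifestly linearly independent. Throughout write $W:=M-U$, an open orientable $d$-manifold ($d$ even) with $\mathrm{rank}\,H^{d-1}(W;\mathbb{Z})=n$ and $H^{d}(W;\mathbb{Z})=0$; fix a basis $\alpha_{1},\dots,\alpha_{n}$ of the free part of $H^{d-1}(W;\mathbb{Z})$. By Theorem~\ref{theoremmain1} we already know $Chdim(C_{k}(W))=(d-1)k$, so there are no cochains above degree $(d-1)k$ in weight $k$, and every element of the complex in bidegree $\big((d-1)k,\,k\big)$ is automatically a cocycle; the whole problem is therefore to understand the image of the differential landing in this top degree.

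First I would isolate the top-degree part of the complex. In the reduced Chevalley–Eilenberg complex of the even-dimensional case each class $\gamma\in H^{p}(W)$ contributes a weight-$1$ generator $u_{\gamma}$ in cohomological degree $p$ and a weight-$2$ generator $v_{\gamma}$ in cohomological degree $p+d-1$. A generator thus carries cohomological degree at most $d-1$ per unit of weight, with equality exactly for $u_{\gamma}$ and $v_{\gamma}$ with $\gamma\in H^{d-1}(W)$. Hence the bidegree $\big((d-1)k,\,k\big)$ part is spanned precisely by the monomials
\[
 u_{i_{1}}\cdots u_{i_{r}}\;v_{1}^{e_{1}}\cdots v_{n}^{e_{n}},\qquad r+2\textstyle\sum_{j}e_{j}=k,
\]
where $u_{i}:=u_{\alpha_{i}}$ has odd degree $d-1$ (so the $u_{i}$ are exterior, $u_{i}^{2}=0$) and $v_{i}:=v_{\alpha_{i}}$ has even degree $2(d-1)$ (so the $v_{i}$ commute and generate a polynomial algebra).

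Next I would compute the differential $\delta$ on these generators. Since $\delta(u_{\gamma})=0$ always, and $\delta(v_{\gamma})$ is, up to sign, the derivation extension of a diagonal term of the form $\sum_{\ell}\pm\,u_{\gamma\smile e_{\ell}}\,u_{e_{\ell}^{\vee}}$, the crucial observation is that for $\gamma=\alpha_{i}\in H^{d-1}(W)$ every summand would require a Poincaré-dual partner in $H^{d}(W)=0$ and therefore vanishes; thus $\delta(v_{i})=0$. Consequently $\mathbb{Z}[v_{1},\dots,v_{n}]$ and its products with the exterior classes $u_{i}$ consist of genuine cocycles, and I would single out the subfamily of minimal $u$-content: the monomials $v_{1}^{e_{1}}\cdots v_{n}^{e_{n}}$ with $\sum e_{j}=k/2$ when $k$ is even, and $u_{i}\,v_{1}^{e_{1}}\cdots v_{n}^{e_{n}}$ with $\sum e_{j}=(k-1)/2$ when $k$ is odd. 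Counting monomials of the relevant degree in $n$ polynomial variables gives exactly $\binom{k/2+n-1}{n-1}$ and $n\binom{(k-1)/2+n-1}{n-1}$ respectively, and since they come from a free polynomial (times exterior) subalgebra over $\mathbb{Z}$ they have infinite order, yielding both the infiniteness and the rank bound.

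The main obstacle is the last step: proving that this subfamily stays linearly independent after passing to cohomology, i.e.\ that no nonzero combination is a coboundary. Here I would argue that any $\xi$ in bidegree $\big((d-1)k-1,\,k\big)$ must use at least one non-top generator $u_{\gamma}$ or $v_{\gamma}$ with $\gamma\in H^{<d-1}(W)$, and that applying the derivation $\delta$ to $\xi$ can never output a monomial supported entirely on the top classes $\{u_{i},v_{i}\}$—again because producing such a monomial would force a cup product into $H^{d}(W)=0$. This is exactly where the hypotheses $H^{d-1}(W)\neq 0$ and $H^{d}(W)=0$ do the work, and it is also why one restricts to minimal $u$-content: for $r\geq 2$ the monomials $u_{i}u_{j}\cdots$ are genuinely entangled by Arnold-type coboundary relations (the same relations responsible for the non-arithmetic collapse in the classical $C_{k}(\mathbb{R}^{2})$ case), whereas the even polynomial generators $v_{i}$ are never in the image of $\delta$. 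Settling the integral statement—ruling out hidden torsion relations so that the generated subgroup is free abelian of the stated rank—is the final technical point.
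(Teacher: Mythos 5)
Your choice of representatives and both counts match the paper exactly, and your cohomological framing (everything in top degree is automatically a cocycle, so the whole problem is ruling out coboundaries) is a legitimate dual of the paper's argument. However, the justification you give for that crucial step is wrong. You claim that $\delta(\xi)$ ``can never output a monomial supported entirely on the top classes $\{u_{i},v_{i}\}$ because producing such a monomial would force a cup product into $H^{d}(W)=0$.'' This conflates the ordinary cup product on $H^{*}(W)$ with the product that actually governs the Chevalley--Eilenberg differential for an \emph{open} manifold, namely the cup product on compactly supported cohomology (equivalently, the intersection product on $H_{*}(W)$), which does not vanish just because $H^{\geq d}(W)=0$. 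Concretely, take $M=T^{2}$, $U$ a point, so $W=T^{2}-pt$ ($d=2$, $n=2$), and let $\gamma_{0}$ generate $H^{0}(W)$: the weight-two generator $v_{\gamma_{0}}$ satisfies $\delta(v_{\gamma_{0}})=\pm u_{1}u_{2}$, because the two core curves of the punctured torus have intersection number $\pm1$. So the image of $\delta$ \emph{does} contain monomials supported entirely on top classes, contrary to your claim; indeed your claim is inconsistent with your own (correct) remark that the $r\geq2$ monomials are ``entangled by Arnold-type coboundary relations'' --- under your claimed vanishing no such relations could exist.

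The step can be repaired, but by the counting argument you only gesture at: since $\delta$ kills every $u_{\gamma}$ and, acting as a derivation, replaces a single $v_{\gamma}$-factor by a sum of products $u_{\gamma'}u_{\gamma''}$, every monomial occurring in any coboundary contains at least \emph{two} $u$-factors, whereas your chosen monomials contain at most one; expanding in the monomial basis, no nonzero combination of them can be a coboundary. (The integral point you defer at the end is then immediate: $\mathrm{rank}\,H^{i}(-;\mathbb{Z})=\dim_{\mathbb{Q}}H^{i}(-;\mathbb{Q})$ by universal coefficients, so no torsion analysis is needed.) For comparison, the paper sidesteps the issue entirely by working \emph{homologically} with $(\Omega^{*,*}_{k}(M-U),\partial)$: there the elements $\mathrm{J}^{k/2}$ and $\mathrm{I}\mathrm{J}^{(k-1)/2}$ are visibly cycles, because $\partial$ needs two $V$-factors to act and these monomials have at most one, and since the complex vanishes above degree $(d-1)k$ there are no boundaries in top degree at all, so no independence argument is required. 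Finally, your opening appeal to Theorem \ref{theoremmain1} is circular --- in the paper Theorems \ref{theoremmain1} and \ref{theoremmain3} are proved simultaneously, the lower bound in Theorem \ref{theoremmain1} being exactly this computation --- but harmlessly so, since the only fact you use (no cochains above degree $(d-1)k$) follows from the generator degrees alone.
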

\begin{theorem}\label{theoremmain4}
Let $M$ be a closed orientable even dimensional manifold. Assume $k\geq 3,$ and $rank(H^{d-1}(M;\mathbb{Z}))=n>1.$ Then the group $H^{(d-1)k+1}(C_{k}(M);\mathbb{Z})$ is infinite. More precisely, the rank of $H^{(d-1)k+1}(C_{k}(M);\mathbb{Z})$ is at least
$$n\binom{\frac{k}{2}+n-2}{n-1}-\binom{\frac{k}{2}+n-1}{n-1}\qquad \text{if $k$ is even }, $$
$$\binom{\frac{k-1}{2}+n-1}{n-1}\qquad\qquad\qquad\qquad \text{if $k$ is odd}.$$
\end{theorem}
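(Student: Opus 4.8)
The plan is to read the top cohomology off a reduced Chevalley--Eilenberg complex and reduce the whole statement to a bookkeeping problem in a single point-counting grading. Since $\operatorname{rank}_{\mathbb Z} H^{(d-1)k+1}(C_k(M);\mathbb Z)=\dim_{\mathbb Q} H^{(d-1)k+1}(C_k(M);\mathbb Q)$, I work rationally throughout. First I would set up (this is the ``first step'' announced in the abstract) the reduced complex $E^{\bullet}$ computing $H^{\ast}(C_k(M);\mathbb Q)$, generated in weight $k$ by two families of classes attached to the top of $H^{\ast}(M)$: weight-one \emph{point} classes $v(a)$ of degree $|a|$ for $a$ in a basis of $H^{d-1}(M)\oplus H^{d}(M)$, and weight-two \emph{edge} classes $w(b)$ of degree $|b|+d-1$, with differential a derivation satisfying $\delta v=0$ and $\delta\bigl(w(b)\bigr)=\sum v(\,\cdot\,)v(\,\cdot\,)$ dual to the diagonal (edge-splitting). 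The crucial point is that $\delta$ is homogeneous of bidegree $(+2,+1)$ for the grading by point-number $P$ (the number of $v$-factors) and cohomological degree: it destroys one edge and creates two points. By Theorem~\ref{theoremmain2} the complex is concentrated in degrees $\le (d-1)k+1$, so every class in degree $(d-1)k+1$ is automatically a cocycle and
$$ H^{(d-1)k+1}(C_k(M);\mathbb Q)=\operatorname{coker}\bigl(\delta\colon E^{(d-1)k}\to E^{(d-1)k+1}\bigr). $$

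Next I would isolate, inside $E^{(d-1)k+1}$, the subspace $T$ of monomials of minimal point-number. A degree count (using $d$ even, so $|v(x_i)|=d-1$ is odd while $|w(x_i)|=2(d-1)$ is even) shows that a top-degree monomial must contain exactly one factor carrying the fundamental class $\mu\in H^{d}(M)$: none forces degree $\le (d-1)k$, and two force degree $\ge (d-1)k+2$. Since the weight-$k$ constraint fixes the parity $P\equiv k\pmod 2$, the minimal point-number is $1$ when $k$ is odd and $2$ when $k$ is even, and the surviving edges are all labelled in $H^{d-1}(M)$. Counting multisets of such (even, hence commuting) edges gives
$$ \dim T=\binom{\tfrac{k-1}{2}+n-1}{n-1}\quad(k\ \text{odd}),\qquad \dim T=n\binom{\tfrac{k}{2}+n-2}{n-1}\quad(k\ \text{even}), $$
the factor $n$ in the even case being the label of the unique non-$\mu$ point.

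The final step is to bound $T\cap\operatorname{im}\delta$ using that $\delta$ raises $P$ by exactly $2$. When $k$ is odd, $T$ has point-number $1$, so any preimage would need point-number $-1$; hence $T\cap\operatorname{im}\delta=0$ and the $\binom{(k-1)/2+n-1}{n-1}$ classes of $T$ inject into cohomology, giving the odd bound. When $k$ is even, $T$ has point-number $2$, so the only monomials of $E^{(d-1)k}$ that can hit $T$ are those of point-number $0$, i.e.\ the pure products of $\tfrac k2$ edges labelled in $H^{d-1}(M)$, of which there are $\binom{k/2+n-1}{n-1}$; thus $\dim(T\cap\operatorname{im}\delta)\le\binom{k/2+n-1}{n-1}$ and
$$ \operatorname{rank} H^{(d-1)k+1}(C_k(M);\mathbb Q)\ \ge\ \dim T-\dim(T\cap\operatorname{im}\delta)\ \ge\ n\binom{\tfrac k2+n-2}{n-1}-\binom{\tfrac k2+n-1}{n-1}. $$
A short check that these expressions are positive for $k\ge 3$ and $n>1$ then shows that $H^{(d-1)k+1}(C_k(M);\mathbb Z)$ has positive rank and is infinite.

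The main obstacle lies entirely in the setup rather than in the counting: I must construct the reduced Chevalley--Eilenberg complex and prove that in weight $k$ its top two cohomological degrees are generated exactly by the point/edge monomials above, with $\delta$ the edge-splitting derivation of point-bidegree $(+2,+1)$ and no hidden contributions from $H^{<d-1}(M)$. Establishing that this reduction is valid integrally (so that the rational rank really is the $\mathbb Z$-rank) and that the edge classes are genuinely even and therefore commute, which legitimises the multiset counts, is where the real work lies; once that structural input is in place, the point-number grading makes the differential lower bounds immediate.
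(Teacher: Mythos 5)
Your proposal is correct and is essentially the paper's own argument: the paper works with the reduced Chevalley--Eilenberg complex $({}^{r}\Omega_{k}^{*,*}(M),D)$ and its weight grading $\omega$ (your point-number grading is just $P=k-2\omega$, so your bidegree $(+2,+1)$ differential is the paper's bidegree $(1,-1)$ differential), and in both cases the bounds come from the same two computations: for odd $k$ the classes $v_{d}\mathrm{J}^{(k-1)/2}$ cannot be coboundaries because the adjacent weight/point-number piece is empty, and for even $k$ one bounds the cokernel of $D\colon \mathrm{J}^{k/2}\to \mathrm{I}v_{d}\mathrm{J}^{k/2-1}$. The structural input you defer (construction of the reduced complex with $v_{d}^{2}=w_{2d-1}=0$) is exactly what the paper supplies in Sections 2--3 via the Knudsen/F\'elix--Thomas model and the acyclic subcomplex $\Omega_{k-2}^{*,*}(M).(v_{d}^{2},w_{2d-1})$.
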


\begin{remark}
From the classical example of Arnold, we see that the integral cohomology group $H^{ i}(C_{k}(\mathbb{R}^{2}),\mathbb{Z})$ is finite for $i>1.$ Also, D. B. Fuchs \cite{F} proved that for an arbitrary degree $i$, one can find a large $k$ such that $H^{\geq i}(C_{k}(\mathbb{R}^{2});\mathbb{Z}_{2})$ is nonzero. So, the top non-zero integral cohomology groups of $C_{k}(\mathbb{R}^{2})$ is finite for sufficiently large $k.$ 
\end{remark}
We formulate a finiteness conjecture for the top integral cohomology of configuration spaces of manifolds with condition $rank(H^{d-1}(M;\mathbb{Z}))= 0$:
\begin{conjecture}(Finiteness)
Let $M$ be an orientable even dimensional manifold. Assume $rank(H^{d-1}(M;\mathbb{Z}))$ is zero. Then there exist $k_{0}\in\mathbb{N}$ such that the cohomology group $H^{top}(C_{k}(M);\mathbb{Z})$ is finite for $k\geq k_{0}.$
\end{conjecture}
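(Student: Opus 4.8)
The plan begins with a purely homological reduction. Since $M$ is of finite type, each $H^{i}(C_{k}(M);\mathbb{Z})$ is a finitely generated abelian group, and by the universal coefficient theorem its free rank equals $\dim_{\mathbb{Q}}H^{i}(C_{k}(M);\mathbb{Q})$. Hence the group $H^{top}(C_{k}(M);\mathbb{Z})$, sitting in the top integral degree $Chdim(C_{k}(M))$, is finite precisely when its free part vanishes, i.e. when $H^{Chdim(C_{k}(M))}(C_{k}(M);\mathbb{Q})=0$. Writing $c_{\mathbb{Q}}(k)$ for the rational cohomological dimension (the top degree in which $H^{\bullet}(C_{k}(M);\mathbb{Q})\neq0$), and noting that $c_{\mathbb{Q}}(k)\leq Chdim(C_{k}(M))$ always holds, the conjecture becomes equivalent to the strict inequality $c_{\mathbb{Q}}(k)<Chdim(C_{k}(M))$ for all $k\geq k_{0}$. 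In other words, one must show that under the hypothesis $rank(H^{d-1}(M;\mathbb{Z}))=0$ the top integral cohomology is eventually carried by torsion, lying strictly above the top rational class.

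The second step is to bound $c_{\mathbb{Q}}(k)$ from above using the reduced Chevalley–Eilenberg complex introduced for the main theorems. Over $\mathbb{Q}$ this complex computes $\bigoplus_{k}H_{*}(C_{k}(M);\mathbb{Q})$ from the cohomology of $M$, and the top rational classes produced in Theorems \ref{theoremmain1}--\ref{theoremmain4} are exactly the symmetric products of the co-dimension-one generators, living in degree $(d-1)k$. When $rank(H^{d-1}(M;\mathbb{Z}))=0$ these generators are rationally absent, so the entire slope-$(d-1)$ family of top classes disappears. The task is to rerun the complex analysis in their absence and extract the exact value, or a sharp upper bound, for $c_{\mathbb{Q}}(k)$; modelled on $C_{k}(\mathbb{R}^{2})$, where $c_{\mathbb{Q}}(k)$ is bounded while $Chdim$ grows, the expectation is that the highest surviving generators force $c_{\mathbb{Q}}(k)$ to grow with a strictly smaller leading coefficient in $k$, or to stabilise.

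The third step is to bound $Chdim(C_{k}(M))$ from below by torsion. I would choose an embedded disk $\mathbb{R}^{d}\hookrightarrow M$ and use the resulting stabilisation / $\mathbb{E}_{d}$-module structure to relate $H^{*}(C_{k}(\mathbb{R}^{d}))$ to $H^{*}(C_{k}(M))$. The mod-$p$ analogues of Fuchs' theorem \cite{F}, together with the Cohen–Lada–May description of $H_{*}(C_{k}(\mathbb{R}^{d});\mathbb{F}_{p})$, supply non-trivial torsion classes in degrees growing with $k$, well above $c_{\mathbb{Q}}(k)$. A transfer or scanning argument should then show that such local torsion classes survive into $H^{*}(C_{k}(M);\mathbb{Z})$, producing non-trivial $p$-torsion in a degree exceeding $c_{\mathbb{Q}}(k)$ for every large $k$, which forces $Chdim(C_{k}(M))>c_{\mathbb{Q}}(k)$ and closes the argument.

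The hard part is this third step, made delicate by two linked difficulties. First, there is no integral Chevalley–Eilenberg model, so the torsion must be detected by comparison (mod-$p$ reductions, Bockstein operations, and the local-to-global transfer) rather than by a direct computation, and one must guarantee the detected classes are genuinely top-dimensional rather than cancelled by higher differentials. Second, and more seriously, the inequality $c_{\mathbb{Q}}(k)<Chdim(C_{k}(M))$ must hold uniformly for every $k\geq k_{0}$, yet the tables of \cite{N2} show that in this regime the integral cohomological dimension behaves irregularly, not arithmetically; ruling out sporadic values of $k$ at which the top class happens to be rational will require a quantitative lower bound on the growth of the torsional dimension that dominates the upper bound on $c_{\mathbb{Q}}(k)$ for all sufficiently large $k$ simultaneously.
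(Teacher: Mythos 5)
The statement you are trying to prove is not a theorem of the paper but an explicitly labelled \emph{conjecture}: the paper offers no proof at all, only the heuristic evidence from Arnold's computation and Fuchs' theorem \cite{F} for $C_{k}(\mathbb{R}^{2})$, so there is no argument of the author's to compare yours against, and your proposal must stand on its own. It does not: it is a research program with the two decisive steps left open, and you say so yourself (``the task is to rerun the complex analysis,'' ``should then show''). Your first step is sound — given finite generation, the universal coefficient theorem does reduce the conjecture to the strict inequality $c_{\mathbb{Q}}(k)<Chdim(C_{k}(M))$ for $k\geq k_{0}$ — but step two contains no actual bound. When $V^{d-1}=0$ rationally, the Chevalley–Eilenberg complex still has generators of $V^{*}$ in degrees $\leq d-2$ and of $W^{*}$ in degrees $\leq 2d-2$ (and $w_{2d-1}$ when $M$ is open), so monomials such as $v_{d}^{k-2\omega}w_{2d-1}^{\omega}$ have degree $dk-\omega$, growing linearly in $k$ with slope $d$, not less than $d-1$; whether such classes die in homology is precisely the computation you defer, and nothing in the paper's Theorems \ref{theoremmain1}--\ref{theoremmain4} addresses the regime $rank(H^{d-1})=0$ — indeed the paper's own remark about Napolitano's tables \cite{N2} warns that the answer there is irregular.

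The third step, which you correctly identify as the hard part, would fail as described. The inclusion of a disk induces $C_{k}(\mathbb{R}^{d})\to C_{k}(M)$, so on cohomology the map goes the wrong way, $H^{*}(C_{k}(M))\to H^{*}(C_{k}(\mathbb{R}^{d}))$; to import Fuchs-type torsion you would instead need injectivity of $H_{*}(C_{k}(\mathbb{R}^{d});\mathbb{F}_{p})\to H_{*}(C_{k}(M);\mathbb{F}_{p})$ in degrees growing linearly with $k$. Those degrees lie far outside the range where homological stability or stable splittings (Church \cite{C}, B\"{o}digheimer–Cohen–Taylor \cite{B-C-T}) give any control, and there is no transfer available since $C_{k}(\mathbb{R}^{d})\to C_{k}(M)$ is an open inclusion, not a finite cover. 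Even granting such torsion classes, you must show they sit at, or that no rational class sits at, the actual top degree $Chdim(C_{k}(M))$, uniformly in $k$ — the ``sporadic $k$'' problem you name but do not solve. In short: your reduction in step one is correct and is a sensible way to organize an attack, but steps two and three are statements of intent, not arguments, so the conjecture remains exactly as open after your proposal as it is in the paper.
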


\subsection*{Outline of the paper}
The paper is split into five sections. In section 2, we give a quick tour of Chevalley–Eilenberg complex. In section 3, we define a reduced Chevalley–Eilenberg complex. Main results are proved in sections 4. In section 5, we give a sharp lower bound for the cohomological dimension of ordered configuration spaces of manifolds. 
\subsection*{General conventions}
$\bullet$ We work throughout with finite dimensional graded vector spaces. The degree of an element $v$ is written $deg(v)$.\\\\
$\bullet$ The symmetric algebra $Sym(V^{*})$ is the tensor product of a polynomial algebra and an exterior algebra:
$$ Sym(V^{*})=\bigoplus_{k\geq0}Sym^{k}(V^{*})=Poly(V^{even})\bigotimes Ext(V^{odd}), $$
where $Sym^{k}$ is generated by the monomials of length $k.$\\\\
$\bullet$ The $n$-th suspension of the graded vector space $V$ is the graded vector space $V[n]$ with
$V[n]_{i} = V_{i-n},$ and the element of $V[n]$ corresponding to $a\in V$ is denoted $s^{n}a;$ for example
$$ H_{*}(S^{2};\mathbb{Q})[n] =\begin{cases}
      \mathbb{Q}, & \text{if $*\in\{n,n+2 \}$} \\
      0, & \mbox{otherwise}.\\
   \end{cases} $$ \\\\
$\bullet$ We write $H_{-*}(M;\mathbb{Q})$ for the graded vector space whose degree $-i$ part is
the $i$-th homology group of $M;$ for example
$$ H_{-*}(\text{CP}^m;\mathbb{Q}) =\begin{cases}
      \mathbb{Q}, & \text{if $*\in\{-2m,-2m+2,\ldots,0. \}$} \\
      0, & \mbox{otherwise}.\\
   \end{cases} $$


\section{Chevalley–Eilenberg complex}
The cohomology and homology of configuration spaces have received a lot of attention, since the work by Arnold. Fulton--Macpherson \cite{F-M} described a model $F(k)$ for the cohomology of $F_{k}(X)$ of a smooth projective variety $X$, where $F(k)$ depends on the cohomology ring of $X$, the canonical orientation class and the Chern classes of $X$. A simplified version of the Fulton--MacPherson model obtained by Kriz \cite{K}. The kriz's model does not depend on Chern classes. Independently the work of Kriz, Totaro proof the same result. Totaro's idea was to study the Leray spectral sequence of the inclusion $F_{k}(X)\subset X^{k}.$ The natural action of the symmetric group on the configuration spaces $F_{k}(X)$
induces an action on the Kriz model. The cohomology of $C_{k}(X)$ is obtained by the $S_{k}$--invariant part of Fulton--MacPherson and Kri\v{z} models (see corollary 8c of \cite{F-M} and remark 1.3 of \cite{K}):$$H^{i}(C_{k}(X);\mathbb{Q})\approx H^{i}(F_{k}(X);\mathbb{Q})^{S_{k}}.$$  B\"{o}digheimer--Cohen--Taylor \cite{B-C-T} studied the homology of $C_{k}(M)$ for the odd dimensional manifolds. F\'{e}lix--Thomas \cite{F-Th} (see also \cite{F-Ta}) constructed the model for rational cohomology of unordered configuration spaces of closed orientable even dimension manifolds. More recently, the identification was established in full generality by the Knudsen in \cite{Kn} using the theory of factorization homology developed by Ayala–Francis [2]. We will restrict our attention to the case
of orientable even dimensional manifolds.

Let us introduced some notations. Consider two graded vector spaces $$V^{*}=H^{-*}_{c}(M;\mathbb{Q})[d],\quad W^{*}=H_{c}^{-*}(M;\mathbb{Q})[2d-1]:$$
where
$$ V^{*}=\bigoplus_{i=0}^{d}V^{i},\quad W^{*}=\bigoplus_{j=d-1}^{2d-1}W^{j}.$$
We choose bases in $V^{i}$ and $W^{j}$ as 
$$V^{i}=\mathbb{Q}\langle v_{i,1},v_{i,2},\ldots\rangle,\quad W^{j}=\mathbb{Q}\langle w_{j,1},w_{j,2},\ldots\rangle$$
(the degree of an element is marked by the first lower index; $x_{i}^{l}$ stands for the product $x_{i}\wedge x_{i}\wedge\ldots\wedge x_{i}$ of $l$-factors). Always we take $V^{0}=\mathbb{Q}\langle v_{0}\rangle$. Now consider the graded algebra
$$ \Omega^{*,*}_{k}(M)=\bigoplus_{i\geq 0}\bigoplus_{\omega=0}^{\left\lfloor\frac{k}{2}\right\rfloor}
\Omega^{i,\omega}_{k}(M)=\bigoplus_{\omega=0}^{\left\lfloor\frac{k}{2}\right\rfloor}\,(Sym^{k-2\omega}(V^{*})\otimes Sym^{\omega}(W^{*})) $$
where the total degree $i$ is given by the grading of $V^{*}$ and $W^{*}$. We called $\omega$ is a weight grading. The differential $\partial:Sym^{2}(V^{*})\rightarrow W^{*}$ is defined as a coderivation by the equation 
$$\partial(s^{d}a\wedge s^{d}b)=(-1)^{(d-1)|b|}s^{2d-1}(a\cup b),$$ where $$\cup\,:H^{-*}_{c}(M;\mathbb{Q})^{\otimes2}\rightarrow H^{-*}_{c}(M;\mathbb{Q})$$
(here $H^{-*}_{c}$ denotes compactly supported cohomology of $M$). The degree of $\partial$ is $-1.$ It can be easily seen that $s^{d}a,\,s^{d}b\in V^{*}$ and $s^{2d-1}(a\cup b)\in W^{*
}.$ The differential $\partial$ extends over  $\Omega^{*,*}_{k}(M)$ by co-Leibniz rule. By definition the elements in $V^{*}$ have length 1 and weight 0 and the elements in $W^{*}$ have length 2 and weight 1. By definition of differential, we have 
$$\partial:\Omega^{*,*}_{k}(M)\longrightarrow\Omega^{*-1,*+1}_{k}(M).$$

\begin{theorem}
	If $d$ is even, $H_{*}(C_{k}(M);\mathbb{Q})$  is isomorphic to the homology of the complex 
	$$ (\Omega^{*,*}_{k}(M),\partial).$$
\end{theorem}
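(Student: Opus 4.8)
The plan is to deduce the statement from Knudsen's factorization-homology computation of the rational homology of configuration spaces \cite{Kn}, specialized to even $d$, by identifying the resulting Chevalley--Eilenberg complex with $(\Omega^{*,*}_k(M),\partial)$. Knudsen's theorem provides, for any $d$-manifold $M$, an isomorphism
$$\bigoplus_{k\geq 0} H_*(C_k(M);\mathbb{Q})\cong H^{CE}_*(\mathfrak{g}_M),$$
where $\mathfrak{g}_M$ is a graded Lie algebra built from the compactly supported cohomology $H^{-*}_c(M;\mathbb{Q})$ and where the number of points is recorded by an internal grading on the Chevalley--Eilenberg complex $Sym(\mathfrak{g}_M[1])$. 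The first step is to recall this isomorphism together with its number-of-points grading, so that $H_*(C_k(M);\mathbb{Q})$ is identified with the corresponding summand of $H^{CE}_*(\mathfrak{g}_M)$.

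The decisive step is to determine $\mathfrak{g}_M$ when $d$ is even. Over $\mathbb{Q}$ the little $d$-discs operad is formal, so the Lie structure is governed by its homology operad, and concretely $\mathfrak{g}_M\cong H^{-*}_c(M;\mathbb{Q})\otimes L_d$, where $L_d$ is the free graded Lie algebra on a single generator $\iota$ of degree $d-1$. Here the parity of $d$ is essential: since $d$ is even the degree $d-1$ is odd, the self-bracket $[\iota,\iota]$ is nonzero and of degree $2d-2$, and the graded Jacobi identity forces $[\iota,[\iota,\iota]]=0$, so that $L_d$ is two-dimensional, spanned by $\iota$ and $[\iota,\iota]$, with all higher brackets vanishing. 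Tensoring with $H^{-*}_c(M;\mathbb{Q})$, the underlying graded vector space of $\mathfrak{g}_M$ splits as $V^*[-1]\oplus W^*[-1]$, where $V^*[-1]=H^{-*}_c(M;\mathbb{Q})\otimes\iota$ and $W^*[-1]=H^{-*}_c(M;\mathbb{Q})\otimes[\iota,\iota]$; the only nontrivial bracket carries $V^*[-1]\wedge V^*[-1]$ into $W^*[-1]$ via the cup product, and $W^*[-1]$ is central. In the number-of-points grading a word of length $\ell$ in $\iota$ counts as $\ell$ points, so the classes in $V^*$ count as one point and those in $W^*$ as two. This is exactly the data in the definitions of $V^*$, $W^*$ and $\partial$ above.

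It then remains to write out the Chevalley--Eilenberg complex of this two-step nilpotent Lie algebra and to match it, grading by grading, with $(\Omega^{*,*}_k(M),\partial)$. Since $W^*[-1]$ is central, the Chevalley--Eilenberg differential reduces to the single coderivation dual to the bracket, and on the $k$-point summand the complex is
$$\bigoplus_{\omega=0}^{\lfloor k/2\rfloor}\bigl(Sym^{k-2\omega}(V^*)\otimes Sym^{\omega}(W^*)\bigr),$$
with each factor of $V^*$ counting as one point and each factor of $W^*$ as two, so that the total number of points is $k$. A direct check of signs shows that the induced differential is $\partial(s^d a\wedge s^d b)=(-1)^{(d-1)|b|}s^{2d-1}(a\cup b)$, extended by the co-Leibniz rule, which is precisely the differential defined above.

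I expect the main obstacle to be the precise identification of $\mathfrak{g}_M$: verifying that Knudsen's Lie algebra is exactly $H^{-*}_c(M;\mathbb{Q})\otimes L_d$ with the cup-product bracket and the stated signs, rather than a merely quasi-isomorphic model. This means carefully tracking the suspensions (the shifts by $d$ and $2d-1$), the Poincar\'e--Lefschetz duality implicit in using $H^{-*}_c$ in place of ordinary cohomology (which matters because the manifolds $M-U$ in the applications are non-compact), and the Koszul signs produced by the odd generator $\iota$. For projective $M$ one could instead take $S_k$-invariants of the Kriz--Totaro model for $F_k(M)$ via the identification $H^{i}(C_{k}(X);\mathbb{Q})\cong H^{i}(F_{k}(X);\mathbb{Q})^{S_{k}}$ recalled above; but this does not reach general even-dimensional manifolds, so the factorization-homology route of Knudsen---equivalently the models of F\'elix--Thomas and F\'elix--Tanr\'e \cite{F-Th,F-Ta}---is the appropriate tool.
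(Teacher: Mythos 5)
Your proposal is correct and takes essentially the same route as the paper: the paper states this theorem without proof, attributing it to Knudsen's factorization-homology identification \cite{Kn} (with the closed case going back to F\'elix--Thomas \cite{F-Th} and F\'elix--Tanr\'e \cite{F-Ta}), which is precisely the citation your argument unpacks. Your identification of Knudsen's Lie algebra as $\mathfrak{g}_M\cong H^{-*}_c(M;\mathbb{Q})\otimes L_d$ with $L_d$ two-dimensional for even $d$, and the matching of its Chevalley--Eilenberg complex (with the number-of-points grading) against $(\Omega^{*,*}_{k}(M),\partial)$, is the standard and correct way to make that citation explicit.
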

For a closed manifold the compactly supported cohomology is the ordinary cohomology. In this case the two graded vector spaces are $$V^{*}=H_{-*}(M;\mathbb{Q})[d],\quad W_{*}=H_{-*}(M;\mathbb{Q})[2d-1].$$
Now, we will define the dual complex of $(\Omega^{*,*}_{k}(M),\partial).$ First, we define a dual differential $D$ on $\Omega^{*,*}_{k}(M).$ The dual differential is defined as $$D|_{V^{*}}=0,\quad D|_{W^{*}}:\,W^{*}\simeq H_{*}(M;\mathbb{Q})\xrightarrow{\Delta} Sym^{2}(V^{*})\simeq  Sym^{2}(H_{*}(M;\mathbb{Q})),$$
where
$\Delta$ is diagonal comultiplication corresponding to cup product. By definition of differential, we have 
$$D:\Omega^{*,*}_{k}(M)\longrightarrow\Omega^{*+1,*-1}_{k}(M).$$
\begin{theorem}
	If $d$ is even and $M$ is closed, then $H^{*}(C_{k}(M);\mathbb{Q})$  is isomorphic to the cohomology of the complex 
	$$ (\Omega^{*,*}_{k}(M),D).$$
\end{theorem}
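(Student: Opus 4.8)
The plan is to deduce this cohomological statement from the preceding homological theorem by a purely formal dualization over the field $\mathbb{Q}$, the one genuinely new hypothesis being that $M$ is closed, so that compactly supported cohomology agrees with ordinary cohomology. Since $M$ is closed it is of finite type, and configuration spaces of finite-type manifolds are again of finite type; hence every graded piece $\Omega^{i,\omega}_{k}(M)$ is a finite-dimensional $\mathbb{Q}$-vector space and $H_{*}(C_{k}(M);\mathbb{Q})$ is finite-dimensional in each degree. This finiteness is exactly what makes dualization harmless.

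First I would record the two standard consequences of working over a field with finite-dimensional pieces. On the one hand, the universal coefficient theorem gives a natural isomorphism $H^{n}(C_{k}(M);\mathbb{Q})\cong\mathrm{Hom}_{\mathbb{Q}}(H_{n}(C_{k}(M);\mathbb{Q}),\mathbb{Q})$. On the other hand, for any chain complex of finite-dimensional $\mathbb{Q}$-spaces the cohomology of the linear dual complex is the dual of the homology, so $H^{n}\big((\Omega^{*,*}_{k}(M))^{\vee},\partial^{\vee}\big)\cong\mathrm{Hom}_{\mathbb{Q}}\big(H_{n}(\Omega^{*,*}_{k}(M),\partial),\mathbb{Q}\big)$. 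Feeding the homological theorem into the right-hand sides, these combine to
$$H^{n}(C_{k}(M);\mathbb{Q})\cong H^{n}\big((\Omega^{*,*}_{k}(M))^{\vee},\partial^{\vee}\big).$$
It therefore remains to identify the dual cochain complex $\big((\Omega^{*,*}_{k}(M))^{\vee},\partial^{\vee}\big)$ with $(\Omega^{*,*}_{k}(M),D)$ in the closed-case normalization.

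For the underlying graded vector space I would dualize the generators: over $\mathbb{Q}$ one has $(H^{-*}_{c}(M;\mathbb{Q})[d])^{\vee}\cong H_{-*}(M;\mathbb{Q})[d]$ and $(H^{-*}_{c}(M;\mathbb{Q})[2d-1])^{\vee}\cong H_{-*}(M;\mathbb{Q})[2d-1]$, which are precisely the spaces $V^{*}$ and $W^{*}$ used for the closed case; here one only uses that for the closed manifold $M$ compactly supported cohomology equals ordinary cohomology together with the universal coefficient identification $\mathrm{Hom}(H^{*}(M),\mathbb{Q})\cong H_{*}(M)$. Since dualizing commutes with $Sym$ in each finite-dimensional degree over $\mathbb{Q}$, this produces a degree-preserving isomorphism $(\Omega^{*,*}_{k}(M))^{\vee}\cong\Omega^{*,*}_{k}(M)$ that sends the weight $\omega$ to $\omega$ and the total degree $i$ to $i$; consequently $\partial^{\vee}$ has degree $+1$ and lowers weight by one, exactly matching the bidegree $\Omega^{*,*}_{k}\to\Omega^{*+1,*-1}_{k}$ of $D$.

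The heart of the matter, and the step I expect to be the main obstacle, is to verify that under this identification the transposed differential $\partial^{\vee}$ is precisely $D$. Because $\partial$ is the coderivation whose corestriction $Sym^{2}(V^{*})\to W^{*}$ is built from the cup product $\cup$ on $H^{-*}_{c}(M)$, its transpose is the derivation whose restriction $W^{*}\to Sym^{2}(V^{*})$ is the transpose of $\cup$, namely the diagonal comultiplication $\Delta$ dual to the cup product; this is verbatim the definition of $D$, and the transpose of a coderivation extended by the co-Leibniz rule is automatically a derivation extended by the Leibniz rule, which is how $D$ is extended over $\Omega^{*,*}_{k}(M)$. The delicate point is the bookkeeping of signs and suspensions: one must track the Koszul signs coming from the shifts $[d]$ and $[2d-1]$ and the sign $(-1)^{(d-1)|b|}$ in the formula for $\partial$, and confirm that they are absorbed into the conventions implicit in $\Delta$ so that $\partial^{\vee}$ agrees with $D$ on the nose (a global sign would in any case not affect cohomology). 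Once this compatibility is checked, combining it with the displayed isomorphism yields $H^{*}(C_{k}(M);\mathbb{Q})\cong H^{*}(\Omega^{*,*}_{k}(M),D)$; alternatively, one could bypass the sign bookkeeping altogether by appealing directly to the cohomological form of the Knudsen and F\'elix--Thomas identification for unordered configuration spaces.
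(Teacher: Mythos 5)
Your proposal is essentially correct, but it is worth knowing that the paper does not prove this statement at all: both theorems of Section 2 are quoted as background, attributed to the literature (F\'elix--Thomas, F\'elix--Tanr\'e, and Knudsen's identification via factorization homology), and the complex $(\Omega^{*,*}_{k}(M),D)$ is simply \emph{defined} there as ``the dual complex'' of $(\Omega^{*,*}_{k}(M),\partial)$. So your route --- deducing the cohomological statement from the homological one by dualizing over $\mathbb{Q}$ --- is a genuine derivation where the paper offers only a citation, and it is the natural one: the universal coefficient theorem over a field, the fact that dualizing a complex of finite-dimensional spaces dualizes its homology, and the identification of the dual of a coderivation with a derivation are all standard, and the finiteness you need is immediate because each graded piece $\Omega^{i,\omega}_{k}(M)$ is finite dimensional (you do not even need finiteness of $H_{*}(C_{k}(M);\mathbb{Q})$ as an input; it follows from the homological theorem). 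The one step you flag but do not carry out --- the Koszul-sign bookkeeping matching $\partial^{\vee}$ with $D$ --- is also the one place where the paper gives you almost nothing to check against, since its definition of $D$ (namely $D|_{V^{*}}=0$ and $D|_{W^{*}}=\Delta$, the comultiplication dual to the cup product) is stated without signs; any discrepancy is repaired by a degree-wise sign rescaling of basis elements, which is a chain isomorphism and so does not affect cohomology, and your fallback of invoking the cohomological form of the F\'elix--Thomas/Knudsen identification is exactly what the paper itself does. In short: your argument is sound and more self-contained than the paper's treatment; the paper's approach buys brevity at the cost of leaving the duality between the two models implicit.
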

The Chevalley–Eilenberg complex naturally appears in the study of the configuration spaces of manifolds. Important examples of its appearance include the work of B\"{o}digheimer--Cohen--Taylor \cite{B-C-T} (see also \cite{Co}) and F\'{e}lix--Thomas \cite{F-Th}, building on McDuff’s foundational work \cite{MD}; the
work of F\'{e}lix--Tanr\'{e} \cite{F-Ta} following Totaro \cite{T}; and the work of Knudsen \cite{Kn}, building on the work of Ayala--Francis \cite{AF}.

\section{Reduced Chevalley–Eilenberg complex}
In this section, we define an acyclic subcomplex of $(\Omega^{*,*}_{k}(M),D).$ 
\begin{theorem}\label{Acyclic}
Let $M$ be a closed orientable manifold of dimension $d.$ The subspace $$\Omega_{k-2}^{*,*}(M).(v_{d}^{2}, w_{2d-1})< \Omega_{k}^{*,*}(M)$$ is acyclic for $k\geq 2.$
\end{theorem}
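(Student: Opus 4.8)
The plan is to reduce the statement to an explicit Koszul-type cancellation governed by the single relation $D(w_{2d-1})=\pm v_d^2$. First I would pin down the differential on the two distinguished generators. Under the identifications $V^{d}=H_{0}(M;\mathbb{Q})$ and $W^{2d-1}=H_{0}(M;\mathbb{Q})$, both $v_d$ and $w_{2d-1}$ correspond to the point class $[pt]$. Since $D|_{W^*}=\Delta$ is the comultiplication dual to cup product, characterized by $\langle \Delta(c),a\otimes b\rangle=\langle c,a\cup b\rangle$ for $a,b\in H^*(M;\mathbb{Q})$, the only pair contributing to $\Delta([pt])$ is $a=b=1$, because $a\cup b$ must lie in $H^0$ to pair nontrivially with $[pt]\in H_0$. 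Hence $\Delta([pt])=[pt]\otimes[pt]$, that is $D(w_{2d-1})=c\,v_d^2$ with $c\neq 0$; after rescaling $w_{2d-1}$ I may take $D(w_{2d-1})=v_d^2$. Also $D(v_d)=0$ as $D|_{V^*}=0$, and since $d$ is even $v_d$ is a polynomial generator, so $v_d^2\neq 0$.

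Next I would check that $S:=v_d^2\cdot \Omega_{k-2}^{*,*}(M)+w_{2d-1}\cdot \Omega_{k-2}^{*,*}(M)$ is a subcomplex. Because $D$ is a derivation of the algebra $\Omega_k^{*,*}(M)$ preserving the length grading, for $\alpha,\beta\in\Omega_{k-2}^{*,*}(M)$ one has $D(v_d^2\alpha)=\pm v_d^2 D(\alpha)$ (using $D(v_d)=0$) and $D(w_{2d-1}\beta)=v_d^2\beta\pm w_{2d-1}D(\beta)$, and both results lie in $S$ since $D(\alpha),D(\beta)\in\Omega_{k-2}^{*,*}(M)$. On the monomial basis, $S$ is exactly the span of those monomials whose $v_d$-exponent is at least $2$ or which contain the factor $w_{2d-1}$.

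For acyclicity I would isolate the single relevant relation by a filtration. Write $D=D_0+D_1$, where $D_0$ is the part of the derivation produced solely by $D(w_{2d-1})=v_d^2$ (it deletes a factor $w_{2d-1}$ and inserts $v_d^2$) and $D_1$ gathers the contributions of $D$ on all other $w$-generators, each of which lands in $Sym^2(V^*)$. Filter $S$ by the number of $w$-generators different from $w_{2d-1}$; then $D_1$ strictly lowers this number while $D_0$ preserves it and preserves length, so the associated graded differential is $D_0$. Writing each monomial of $\mathrm{gr}\,S$ as $v_d^{m}\,w_{2d-1}^{\varepsilon}\,r$ with $\varepsilon\in\{0,1\}$ and $r$ a monomial in the remaining generators, the differential $D_0$ only exchanges the $w_{2d-1}$-type and $v_d^2$-type factors, so $(\mathrm{gr}\,S,D_0)$ splits as a direct sum of two-term complexes $\mathbb{Q}\langle v_d^{m}\,w_{2d-1}\,r\rangle\xrightarrow{\;\cong\;}\mathbb{Q}\langle v_d^{m+2}\,r\rangle$, each an isomorphism and hence acyclic. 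Therefore $H(\mathrm{gr}\,S,D_0)=0$, and since the filtration is bounded (a monomial of $\Omega_k^{*,*}(M)$ carries at most $\lfloor k/2\rfloor$ $w$-factors) the associated spectral sequence converges and yields $H(S,D)=0$.

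The main obstacle is the interaction of this cancellation with the remaining differential $D_1$: one cannot simply contract $S$ by trading $v_d^2$ for $w_{2d-1}$ globally, because $D$ simultaneously acts on every other $w$-generator. The filtration by the number of such generators is precisely what quarantines the relation $D(w_{2d-1})=v_d^2$ onto the associated graded, where the cancellation becomes a direct sum of isomorphisms. The only remaining care concerns the signs in the derivation $D$ and the symmetrization constant in $\Delta(w_{2d-1})$; the latter is a nonzero rational scalar absorbed by rescaling $w_{2d-1}$, and the base case $k=2$, where $S=\mathbb{Q}\langle v_d^2,w_{2d-1}\rangle$ with $D(w_{2d-1})=v_d^2$, is immediate.
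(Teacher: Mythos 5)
Your filtration argument is correct, but it is genuinely different from the paper's proof, which is a one-step contraction: the paper writes each element of the subspace uniquely as $v_d^2A+Bw_{2d-1}$ with $A,B$ free of $w_{2d-1}$ and exhibits an explicit chain homotopy trading the factors $v_d^2$ and $w_{2d-1}$ against each other, with no filtration or spectral sequence. Notably, your stated ``main obstacle'' --- that one cannot contract globally because $D$ also acts on the other $w$-generators --- is not actually an obstacle: defining $h$ on monomials by $h(\mu)=0$ if $\mu$ contains $w_{2d-1}$ and $h(v_d^m r)=v_d^{m-2}w_{2d-1}r$ if $m\geq 2$ and $r$ is free of $v_d,w_{2d-1}$, one checks $Dh+hD=\mathrm{id}$ \emph{exactly}, because $v_d$ is central (even degree), $D(r)$ never produces $w_{2d-1}$, and the cross terms $v_d^{m-2}w_{2d-1}D(r)$ and $h(v_d^mD(r))$ cancel identically; no error terms of ``lower order'' survive. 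So the paper's route is more elementary, while yours buys robustness: all signs and the scalar in $D(w_{2d-1})=c\,v_d^2$ are quarantined into the associated graded, where only the nonvanishing of $c$ matters. That robustness has real value here, since the homotopy as actually printed in the paper, $h(v_d^2A+Bw_{2d-1})=Bv_d^2$, cannot be right as stated: it has degree $+1$ (the same as $D$), and it already fails on $x=v_d^2$ for $k=2$, where it gives $(Dh+hD)(x)=0\neq x$; the intended operator is the degree $-1$ map above (equivalently $v_d^2A+Bw_{2d-1}\mapsto Aw_{2d-1}$ with monomial-wise sign conventions), i.e.\ the roles of $v_d^2$ and $w_{2d-1}$ are swapped in the paper's formula. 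Your proof, besides being valid, also fills in two points the paper leaves implicit: the computation $\Delta([pt])=[pt]\otimes[pt]$ giving $D(w_{2d-1})=c\,v_d^2$ with $c\neq0$, and the verification that $\Omega_{k-2}^{*,*}(M)\cdot(v_d^2,w_{2d-1})$ is in fact a $D$-subcomplex.
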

\begin{proof}
Let $M$ is closed and orientable. An element in $\Omega_{k-2}^{*,*}(M).(v_{d}^{2}, w_{2d-1})$ has a unique expansion $v_{d}^{2}A+Bw_{2d-1},$ where $A$ and $B$ have no monomial containing $w_{2d-1}.$ The operator $$h(v_{d}^{2}A+Bw_{2d-1})=Bv_{d}^{2}$$ gives a homotopy $id\simeq 0.$

\end{proof}
We denote the reduced complex $(\Omega_{k}^{*,*}(M)/\Omega_{k-2}^{*,*}(M).(v_{d}^{2}, w_{2d-1}),D_{\text{induced}})$ by 
$$({}^{r}\Omega_{k}^{*,*}(M),D).$$
\begin{corollary}\label{reduced}
If $k\geq2$ and $M$ is closed orientable, then we have an isomorphism $$H^{*}({}^{r}\Omega_{k}^{*,*}(M),D)\cong H^{*}(C_{k}(M)).$$
\end{corollary}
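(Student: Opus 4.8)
The plan is to exhibit the reduced complex as the quotient of the full Chevalley–Eilenberg complex $(\Omega_{k}^{*,*}(M),D)$ by the acyclic subcomplex of Theorem \ref{Acyclic}, and then transport cohomology along the resulting long exact sequence. The first thing I would nail down is that
$$S:=\Omega_{k-2}^{*,*}(M).(v_{d}^{2}, w_{2d-1})$$
is genuinely a $D$-stable subcomplex, not merely a graded subspace; this is exactly what makes the induced differential $D_{\text{induced}}$ well defined on the quotient ${}^{r}\Omega_{k}^{*,*}(M)$. Since $D$ is a derivation of the algebra $\Omega_{k}^{*,*}(M)$ with $D|_{V^{*}}=0$, we get $D(v_{d}^{2})=0$, while $D(w_{2d-1})$ is the image of the point class under the coproduct $\Delta$ dual to the cup product. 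Because $w_{2d-1}$ corresponds to the generator of $H_{0}(M)$ and $M$ is connected, $\Delta$ can only produce the purely degree-zero term, so $D(w_{2d-1})=\pm v_{d}^{2}\in S$. The derivation rule then gives $D(v_{d}^{2}A+w_{2d-1}B)=v_{d}^{2}D(A)\pm v_{d}^{2}B\pm w_{2d-1}D(B)\in S$, confirming $D(S)\subseteq S$.

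With $S$ a subcomplex, I would form the short exact sequence of cochain complexes
$$0\longrightarrow S\longrightarrow \Omega_{k}^{*,*}(M)\xrightarrow{\;\pi\;}{}^{r}\Omega_{k}^{*,*}(M)\longrightarrow 0$$
and pass to the long exact sequence in $D$-cohomology. Theorem \ref{Acyclic} gives $H^{*}(S,D)=0$ for $k\geq 2$, so every segment $H^{n}(S)\to H^{n}(\Omega_{k}^{*,*}(M))\to H^{n}({}^{r}\Omega_{k}^{*,*}(M))\to H^{n+1}(S)$ collapses, yielding isomorphisms $H^{n}(\Omega_{k}^{*,*}(M),D)\cong H^{n}({}^{r}\Omega_{k}^{*,*}(M),D)$ for all $n$.

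Finally I would invoke the second theorem of Section 2, which (for $d$ even and $M$ closed) identifies $H^{*}(\Omega_{k}^{*,*}(M),D)$ with $H^{*}(C_{k}(M);\mathbb{Q})$. Composing that identification with the isomorphism produced by the long exact sequence gives $H^{*}({}^{r}\Omega_{k}^{*,*}(M),D)\cong H^{*}(C_{k}(M))$, as claimed. The only genuinely non-formal point is the subcomplex verification, i.e. the coproduct computation $D(w_{2d-1})=\pm v_{d}^{2}$; once that is secured, the rest is the standard homological fact that quotienting a complex by an acyclic subcomplex preserves cohomology, so I expect no further obstacle.
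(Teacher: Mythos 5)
Your proposal is correct and is essentially the paper's (largely implicit) argument: Corollary \ref{reduced} is meant to follow from Theorem \ref{Acyclic} by exactly the long-exact-sequence/quotient-by-acyclic-subcomplex mechanism you describe, combined with the Section 2 theorem identifying $H^{*}(\Omega_{k}^{*,*}(M),D)$ with $H^{*}(C_{k}(M);\mathbb{Q})$ for $d$ even. Your verification that $D(w_{2d-1})=\pm v_{d}^{2}$ makes $\Omega_{k-2}^{*,*}(M).(v_{d}^{2},w_{2d-1})$ a genuine $D$-stable subcomplex is a detail the paper takes for granted, so you have if anything supplied more justification than the original.
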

\begin{remark}
The particular case $({}^{r}\Omega_{k}^{*,*}(\mathbb{CP}^{m}),D)$ of reduced complex is already obtained by author in \cite{Y}. 
\end{remark}
\begin{remark}
If $M$ is not closed then the subspace $\Omega_{k-2}^{*,*}(M).(v_{d}^{2}, w_{2d-1})$ is vanish. 
\end{remark}

\section{Proofs of main theorems}
In this section, we give the proofs of Theorems \ref{theoremmain1} and \ref{theoremmain2}. A space $X$ is $r$-connected if $\pi_{i}(X)=0$ for $0\leq i\leq r.$ The connectivity of $X;$ $conn(X),$ is the largest integer with such property. The connectivity of contractible spaces is infinite. Kallel \cite{Ka} proved the following result.
\begin{theorem}\label{Kallel}
Let $M$ be a compact orientable even dimensional manifold and $k\geq 2.$ we have
$$ Chdim(C_{k}(M-U)) \leq\begin{cases}
      (d-1)k-r+1, & \mbox{if }\partial M\cup U=\phi\\
      (d-1)k-r, & \mbox{if }\partial M\cup U\neq\phi,\\
   \end{cases}
$$
where $r\geq 0$ is the connectivity of $M$ if $\partial M\cup U=\phi,$ or the connectivity of the quotient $M/M\cup U$ if $\partial M\cup U\neq\phi.$
\end{theorem}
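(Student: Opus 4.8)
The plan is to use that, for $k\ge2$, $C_{k}(M-U)$ is a smooth \emph{open} manifold of dimension $kd$ --- non-compact because distinct points may collide and, when $\partial M\cup U\neq\phi$, also because points may escape into $\partial M\cup U$. Since $M$ is orientable and even-dimensional, $C_{k}(M-U)$ is orientable, so Poincar\'e--Lefschetz duality reads
$$H^{i}(C_{k}(M-U);\mathbb{Z})\cong \bar{H}^{\,\mathrm{lf}}_{kd-i}(C_{k}(M-U);\mathbb{Z}),$$
where $\bar{H}^{\,\mathrm{lf}}$ denotes locally finite (Borel--Moore) homology. The asserted bound is therefore equivalent to the vanishing of $\bar{H}^{\,\mathrm{lf}}_{j}$ for $j<k+r-1$ in the closed case and for $j<k+r$ in the non-compact case. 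As $\bar{H}^{\,\mathrm{lf}}_{*}(X)\cong\widetilde{H}_{*}(X^{+})$ for the one-point compactification $X^{+}$, the whole statement becomes a connectivity assertion: $C_{k}(M-U)^{+}$ should be $(k+r-2)$-connected when $\partial M\cup U=\phi$ and $(k+r-1)$-connected otherwise.

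I would prove this connectivity by a scanning identification of the compactified configuration space. Up to the May--Segal comparison, $C_{k}(M-U)^{+}$ is the $k$-th subquotient of a labelled mapping space and is assembled, through an extended-power filtration, from the cofiber recording where a configuration disappears: the collapse $M/(\partial M\cup U)$ in the non-compact case and $M_{+}$ in the closed case. The hypothesis on $r$ is exactly that this collapse is $r$-connected; tracking connectivity through the filtration produces the linear threshold $k+r$, the single extra unit in the closed case reflecting the difference between $M_{+}$ and $M/(\partial M\cup U)$. As a sanity check, for $M=\mathrm{int}\,D^{d}$ one has $D^{d}/\partial D^{d}=S^{d}$, hence $r=d-1$, and the bound specialises to $Chdim\,C_{k}(\mathbb{R}^{d})\le(d-1)(k-1)$, consistent with $C_{2}(\mathbb{R}^{d})\simeq\mathbb{RP}^{d-1}$.

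As a first approximation --- and to isolate the genuinely hard point --- I would run the analogous argument for the ordered space through the iterated Fadell--Neuwirth fibration $F_{k}(W)\to F_{k-1}(W)$, $W:=M-U$, whose fibre is $W$ with finitely many points deleted. Each fibre is again an open $d$-manifold, hence of homotopy dimension $\le d-1$, and deleting a point affects homotopy groups only in degrees $\ge d-1$, so connectivity is preserved down to $\min(r,d-2)$. The Leray--Serre spectral sequence then gives $Chdim\,F_{k}(W)\le k(d-1)$, and rationally one descends to $C_{k}(W)$ at once via $H^{*}(C_{k};\mathbb{Q})\cong H^{*}(F_{k};\mathbb{Q})^{S_{k}}$.

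The main obstacle is the descent to the \emph{unordered} space over $\mathbb{Z}$: the free $S_{k}$-action can raise integral cohomological dimension (as already for lens spaces), so the clean fibration bound for $F_{k}$ does not transfer formally, and it is precisely the torsion classes --- invisible to the rational Chevalley--Eilenberg complex --- that control the true size of $Chdim$. This is why I would take the duality/scanning route as primary, since it works integrally and addresses $C_{k}$ directly. The price is that one must make the May--Segal identification and the extended-power connectivity estimate precise over $\mathbb{Z}$, and carefully bookkeep the two constants ($-r+1$ versus $-r$) distinguishing the compact and non-compact cases; pinning down the exact connectivity of $C_{k}(M-U)^{+}$ is the step I expect to require the most care.
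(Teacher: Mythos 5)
First, a point of comparison: the paper does not prove this statement at all --- it is quoted as Kallel's theorem and cited to \cite{Ka} --- so your proposal can only be measured against Kallel's published argument. Measured that way, your overall strategy is the right one, and is in fact Kallel's own: Poincar\'e duality for the open oriented $kd$-manifold $C_{k}(M-U)$ (orientable here because $M$ is orientable and $d$ is even), rewriting the asserted bound as a connectivity statement about the one-point compactification $C_{k}(M-U)^{+}$, and then attacking that compactification with configuration-space/symmetric-product technology; the title of \cite{Ka} is precisely ``Symmetric products, duality and homological dimension of configuration spaces.'' Your numerical translation is also correct ($(k+r-2)$-connectivity in the closed case, $(k+r-1)$-connectivity otherwise), as is the $\mathrm{int}\,D^{d}$ sanity check.

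The genuine gap is that the step you defer --- ``pinning down the exact connectivity of $C_{k}(M-U)^{+}$'' --- is not a final technical detail: it is the entire content of the theorem, and the mechanism you name would not deliver it. The $k$-th subquotient of the May--Segal filtration of a labelled configuration space $C(M-U;X)$ is the extended power $F_{k}(M-U)_{+}\wedge_{S_{k}}X^{\wedge k}$, which for $X=S^{0}$ is $C_{k}(M-U)$ with a \emph{disjoint} basepoint, not its one-point compactification: in labelled configuration spaces particles disappear only by drifting into $\partial M\cup U$ or via the label basepoint, never by collision, so the collision end of $C_{k}(M-U)$ --- exactly the delicate part of the connectivity count --- is invisible to that filtration. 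What one actually needs (and what Kallel uses) is the identification $C_{k}(M-U)^{+}\cong SP^{k}(M)/D$, where $D$ is the closed locus of multisets that have a repeated point or meet $\partial M\cup U$; equivalently, the quotient of the truncated symmetric product $SP^{k}\bigl(M/(\partial M\cup U)\bigr)/SP^{k-1}$ by the image of the fat diagonal. The theorem is then equivalent to the lemma that, for an $r$-connected collapse, this quotient is $(k+r-1)$-connected \emph{integrally}, and that lemma requires genuine input on symmetric products (Kallel's truncated-power analysis), not formal bookkeeping: collapsing the fat diagonal is controlled by the homology of a pair whose low-degree behaviour is exactly where the $S_{k}$-torsion lives. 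Your fallback route (Fadell--Neuwirth plus transfer) gives the ordered bound and the rational unordered bound, but, as you note yourself, it cannot see integral torsion, so it does not close this gap either.
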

\textit{Proof of Theorems \ref{theoremmain1} and \ref{theoremmain3}.} Let $\partial M\cup U\neq\phi$ and $k\geq1.$ There is no element of degree bigger than $(d-1)k$ in the complex $(\Omega^{*,*}_{k}(M-U),\partial).$ Now, we want to show that $H_{(d-1)k}(C_{k}(M-U);\mathbb{Q})$ is not trivial for $rank (H^{d-1}(C_{k}(M-U);\mathbb{Z}))\neq0.$ Let $$rank (H^{d-1}(C_{k}(M-U);\mathbb{Z}))=n\geq1.$$ We have $dim(V^{d-1})=n\geq1$ and $dim(W^{2d-2})=n\geq1.$ We choose bases in $V^{d-1}$ and $W^{2d-2}$ as 
$$V^{d-1}=\langle v_{d-1,1},\ldots,v_{d-1,n}\rangle,\quad W^{2d-2}=\langle w_{2d-2,1},\ldots,w_{2d-2,n}\rangle.$$ Due to degree reason, we have $\partial(v_{d-1,i})=0$ and $\partial(w_{2d-2,i})=0.$ Here and in the following computations we use the notation
$$\text{I}=\langle v_{d-1,1},v_{d-1,2},\ldots,v_{d-1,n}\rangle,\quad \text{J}=\langle w_{2d-2,1},w_{2d-2,2}\ldots,w_{2d-2,n}\rangle,$$
and
$$\text{J}^{k}=\langle w_{2d-2,i_{1}}w_{2d-2,i_{2}}\ldots w_{2d-2,i_{k}}\,|\,1\leq i_{j}\leq n\rangle.$$
Clearly, $dim(\text{I})=n$ and $dim(\text{J}^{k})=\binom{k+n-1}{n-1}.$\\\\
\textbf{Case 1.} Let $k\geq 2$ even. Consider $\text{J}^{\frac{k}{2}}= \Omega^{(d-1)k,\frac{k}{2}}_{k}(M-U),$ where $$dim\left(\Omega^{(d-1)k,\frac{k}{2}}_{k}(M-U)\right)=\binom{\frac{k}{2}+n-1}{n-1}.$$ Clearly $\partial(\text{J}^{\frac{k}{2}})=0.$ The degree of $\partial$ is -1. Also, there is no element of degree bigger than $(d-1)k.$ Therefore, $\text{J}^{\frac{k}{2}}$ gives the homology classes. Hence, for $k\geq2$ even, we have
$$dim \left(H_{(d-1)k}(C_{k}(M-U);\mathbb{Q})\right)\geq \binom{\frac{k}{2}+n-1}{n-1}\neq0.$$
\textbf{Case 2.} Let $k\geq 1$ odd. Consider $\text{I}\text{J}^{\frac{k-1}{2}}= \Omega^{(d-1)k,\frac{k-1}{2}}_{k}(M-U),$ where $$dim\left(\Omega^{(d-1)k,\frac{k-1}{2}}_{k}(M-U)\right)=n\binom{\frac{k-1}{2}+n-1}{n-1}.$$ Clearly, $\partial(\text{I}\text{J}^{\frac{k-1}{2}})=0.$ The differential $\partial$ has bidegree $(-1,1).$ Moreover, there is no element of degree bigger than $(d-1)k.$ 
Therefore, $\text{I}\text{J}^{\frac{k-1}{2}}$ gives the homology classes. Hence, for $k\geq1$ odd, we have
$$dim \left(H_{(d-1)k}(C_{k}(M-U);\mathbb{Q})\right)\geq n\binom{\frac{k-1}{2}+n-1}{n-1}\neq0.$$\\
We have isomorphism 
$$H^{(d-1)k}(C_{k}(M-U);\mathbb{Q})\cong H_{(d-1)k}(C_{k}(M-U);\mathbb{Q}).$$
If $k\geq 1$ and $M\cup U\neq\phi,$ then from Theorem \ref{Kallel}, we have 
 $ Chdim(C_{k}(M-U))\leq (d-1)k-r.$
Hence, from the previous computations, we have  
$$ Chdim(C_{k}(M-U))= (d-1)k.$$
This completes the proof.
$\hfill \square$\\\\

\textit{Proof of Theorems \ref{theoremmain2} and \ref{theoremmain4}.} Let $M$ is closed and $k\geq3.$ There is no element of degree bigger than $(d-1)k+1$ in the reduced complex $({}^{r}\Omega_{k}^{*,*}(M),D).$  Now, we want to show that $H^{(d-1)k+1}(C_{k}(M);\mathbb{Q})$ is not trivial for $rank (H^{d-1}(C_{k}(M);\mathbb{Z}))\neq0,1.$ Let $$rank (H^{d-1}(C_{k}(M);\mathbb{Z}))=n\geq2.$$ We have $dim(V^{d-1})=n\geq2$ and $dim(W^{2d-2})=n\geq2.$ We choose bases in $V^{d-1}$ and $W^{2d-2}$ as 
$$V^{d-1}=\langle v_{d-1,1},v_{d-1,2},\ldots,v_{d-1,n}\rangle,\quad W^{2d-2}=\langle w_{2d-2,1},w_{2d-2,2},\ldots,w_{2d-2,n}\rangle.$$ By definition of differential, we have $D(v_{d-1,i})=0$ and $D(w_{2d-2,i})=2v_{d-1,i}v_{d}.$ Here and in the following computations we use the notation
$$\text{I}=\langle v_{d-1,1},v_{d-1,2},\ldots,v_{d-1,n}\rangle,\quad \text{J}=\langle w_{2d-2,1},w_{2d-2,2}\ldots,w_{2d-2,n}\rangle,$$
and
$$\text{J}^{k}=\langle w_{2d-2,i_{1}}w_{2d-2,i_{2}}\ldots w_{2d-2,i_{k}}\,|\,1\leq i_{j}\leq n\rangle.$$
Clearly, $dim(\text{I})=n$ and $dim(\text{J}^{k})=\binom{k+n-1}{n-1}.$\\\\
\textbf{Case 1.} Let $k\geq 3$ odd. Consider $v_{d}\text{J}^{\frac{k-1}{2}}={}^{r}\Omega^{(d-1)k+1,\frac{k-1}{2}}_{k}(M),$ where $$dim\left({}^{r}\Omega^{(d-1)k+1,\frac{k-1}{2}}_{k}(M)\right)=\binom{\frac{k-1}{2}+n-1}{n-1}.$$ We have $D(v_{d}\text{J}^{\frac{k-1}{2}})=0,$ in the reduced complex (Note that $v_{d}^{a\geq2}=0$ in the reduced complex). Also, ${}^{r}\Omega^{*,l\geq \frac{k-1}{2}}_{k}(M)=0.$ The differential $D$ has bidegree $(1,-1).$ Therefore,  $v_{d}\text{J}^{\frac{k-1}{2}}$ gives the cohomology classes. Hence, for $k\geq 3$ odd, we have $$dim \left(H^{(d-1)k+1}(C_{k}(M);\mathbb{Q})\right)\geq \binom{\frac{k-1}{2}+n-1}{n-1}\neq0.$$
\textbf{Case 2.} Let $k\geq 4$ even. Consider $\text{I}v_{d}\text{J}^{\frac{k}{2}-1}={}^{r}\Omega^{(d-1)k+1,\frac{k}{2}-1}_{k}(M)$ and $\text{J}^{\frac{k}{2}}={}^{r}\Omega^{(d-1)k,\frac{k}{2}}_{k}(M).$ We have $D(\text{I}v_{d}\text{J}^{\frac{k}{2}-1})=0$ in the reduced complex (Note that $v_{d}^{a\geq2}=0$ in the reduced complex). Therefore, we have sub-complex
$$0\rightarrow {}^{r}\Omega^{(d-1)k,\frac{k}{2}}_{k}(M)\rightarrow {}^{r}\Omega^{(d-1)k+1,\frac{k}{2}-1}_{k}(M)\rightarrow 0,$$
where $dim\left({}^{r}\Omega^{(d-1)k,\frac{k}{2}}_{k}(M)\right)=\binom{\frac{k}{2}+n-1}{n-1}$ and $dim\left({}^{r}\Omega^{(d-1)k+1,\frac{k}{2}-1}_{k}(M)\right)=n\binom{\frac{k}{2}+n-2}{n-1}.$ 
Clearly, for $k>2$ even, and $n>1,$ we have $$n\binom{\frac{k}{2}+n-2}{n-1}>\binom{\frac{k}{2}+n-1}{n-1}.$$
This implies that $$dim \left(H^{(d-1)k+1}(C_{k}(M);\mathbb{Q})\right)\geq n\binom{\frac{k}{2}+n-2}{n-1}-\binom{\frac{k}{2}+n-1}{n-1}\neq0,$$ where $k>2$ even, and $n>1.$\\

If $k\geq 2$ and $M\cup U=\phi,$ then from Theorem \ref{Kallel}, we have 
 $ Chdim(C_{k}(M))\leq (d-1)k-r+1.$
Hence, from the previous computations, we have  
$$ Chdim(C_{k}(M))= (d-1)k+1.$$ This completes the proof.
$\hfill \square$

\section{Application to ordered configuration spaces}
As an application of Theorem \ref{theoremmain1} and Theorem \ref{theoremmain2}, we immediately obtain a sharp lower bound for the cohomological dimension of ordered configuration spaces. 
\begin{corollary}\label{cormain1}
Let $M$ be a compact connected orientable even dimensional manifold. Assume $\partial M\cup U\neq\phi$ and $rank(H^{d-1}(M-U;\mathbb{Z}))\neq0.$ Then, for $k\geq1,$ we have $$ Chdim(F_{k}(M-U))\geq (d-1)k.$$
\end{corollary}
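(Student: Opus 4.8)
The plan is to leverage Theorem \ref{theoremmain1}, which pins down the integral cohomological dimension of the unordered space $C_{k}(M-U)$, and to transport the non-vanishing of its top cohomology to the ordered space $F_{k}(M-U)$ through the quotient projection $\pi\colon F_{k}(M-U)\to C_{k}(M-U)$. Since distinct points cannot be permuted into coincidence, the symmetric group $S_{k}$ acts freely on $F_{k}(M-U)$, so $\pi$ is a regular $k!$-sheeted covering with deck group $S_{k}$. The whole argument is a transfer comparison between the two spaces in the same top degree $(d-1)k$.

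First I would extract from Theorem \ref{theoremmain1} that, under the stated hypotheses, $H^{(d-1)k}(C_{k}(M-U);\mathbb{Z})$ is infinite. Because all spaces here are of finite type, this integral group is finitely generated, so being infinite forces it to have positive rank; tensoring with $\mathbb{Q}$ then yields $H^{(d-1)k}(C_{k}(M-U);\mathbb{Q})\neq 0$. Next I would invoke the standard transfer principle for finite coverings: since $|S_{k}|=k!$ is invertible in $\mathbb{Q}$, the free action makes $\pi^{*}$ an isomorphism of $H^{*}(C_{k}(M-U);\mathbb{Q})$ onto the $S_{k}$-invariant part $H^{*}(F_{k}(M-U);\mathbb{Q})^{S_{k}}$ — the same rational identification recorded for projective varieties in Section 2, but valid here for any free finite quotient. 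In particular $H^{(d-1)k}(F_{k}(M-U);\mathbb{Q})^{S_{k}}\neq 0$, whence $H^{(d-1)k}(F_{k}(M-U);\mathbb{Q})\neq 0$, and therefore $H^{(d-1)k}(F_{k}(M-U);\mathbb{Z})$ has positive rank and is non-zero. This exhibits a non-trivial integral cohomology class in degree $(d-1)k$, giving $Chdim(F_{k}(M-U))\geq (d-1)k$.

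The argument is essentially formal once Theorem \ref{theoremmain1} is available, so there is no genuine obstacle; the only point that demands care is the passage between the integral statement that $Chdim$ controls (infiniteness of the top group) and the rational setting where the invariants/transfer machinery operates. The finite-type hypothesis makes both conversions automatic, and the freeness of the $S_{k}$-action guarantees that the invariants description is the correct rational model, so no further input is required.
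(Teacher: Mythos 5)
Your proposal is correct and takes essentially the same route as the paper: the paper's proof likewise combines Theorem \ref{theoremmain1} with the transfer isomorphism $H^{i}(C_{k}(M-U);\mathbb{Q})\cong H^{i}(F_{k}(M-U);\mathbb{Q})^{S_{k}}$ for the finite cover $F_{k}(M-U)\to C_{k}(M-U)$ to transport non-vanishing of the top rational cohomology to the ordered space. The only cosmetic difference is that you recover the rational non-vanishing by converting the integral infiniteness statement of Theorem \ref{theoremmain1} back to rank (using finite generation), whereas the paper's proof of Theorem \ref{theoremmain1} already establishes the rational non-vanishing directly, so no conversion is needed.
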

\begin{corollary}\label{cormain2}
Let $M$ be a closed connected orientable even dimensional manifold. Assume $rank(H^{d-1}(M;\mathbb{Z}))\neq0, 1.$ Then, for $k\geq3,$ we have $$ Chdim(F_{k}(M))\geq (d-1)k+1.$$
\end{corollary}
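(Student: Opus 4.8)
The plan is to deduce the lower bound directly from Theorem \ref{theoremmain2} by comparing the ordered configuration space with its unordered quotient. The symmetric group $S_{k}$ acts on $F_{k}(M)$ by permuting coordinates, and since the points of a configuration are pairwise distinct this action is free; hence the quotient map
$$\pi\colon F_{k}(M)\longrightarrow C_{k}(M)=F_{k}(M)/S_{k}$$
is a regular covering of degree $k!$. My goal is to show $H^{(d-1)k+1}(F_{k}(M);\mathbb{Z})\neq 0$, which by the definition of $Chdim$ immediately yields $Chdim(F_{k}(M))\geq (d-1)k+1$.

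The main tool is the rational transfer for free actions of finite groups, so the first step is to pass to rational coefficients. For a free action of a finite group $G$ on a reasonable space $X$, the pullback along the quotient map induces an isomorphism onto the invariants,
$$H^{*}(X/G;\mathbb{Q})\xrightarrow{\ \cong\ }H^{*}(X;\mathbb{Q})^{G};$$
equivalently, this follows from the existence of a transfer homomorphism $\tau$ with $\tau\circ\pi^{*}=|G|\cdot\mathrm{id}$, which is invertible over $\mathbb{Q}$. Applying this with $G=S_{k}$ and $X=F_{k}(M)$ shows that $\pi^{*}\colon H^{*}(C_{k}(M);\mathbb{Q})\to H^{*}(F_{k}(M);\mathbb{Q})$ is injective.

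Next I would invoke Theorem \ref{theoremmain2}: under the stated hypotheses ($M$ closed orientable even dimensional, $rank(H^{d-1}(M;\mathbb{Z}))=n>1$, and $k\geq 3$) the group $H^{(d-1)k+1}(C_{k}(M);\mathbb{Z})$ is infinite, hence has positive rank, so $H^{(d-1)k+1}(C_{k}(M);\mathbb{Q})\neq 0$. Injectivity of $\pi^{*}$ then forces $H^{(d-1)k+1}(F_{k}(M);\mathbb{Q})\neq 0$, so the integral group $H^{(d-1)k+1}(F_{k}(M);\mathbb{Z})$ has positive rank and in particular is non-zero. This is exactly the claimed lower bound. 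The proof of Corollary \ref{cormain1} is identical, using Theorem \ref{theoremmain1} in place of Theorem \ref{theoremmain2}.

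I do not expect a serious obstacle here: all of the real content sits in Theorem \ref{theoremmain2}, and the passage to $F_{k}(M)$ is the formal transfer argument above. The only points worth a word of care are that the estimate is a priori only a lower bound — the covering detects only the $S_{k}$-invariant part, whereas $F_{k}(M)$ is an open $kd$-manifold with ample room for higher cohomology arising from the non-invariant summands — and that the infiniteness of the integral group is used solely to guarantee non-vanishing after tensoring with $\mathbb{Q}$, since any torsion would be invisible to the rational transfer. The sharpness asserted in the word ``sharp'' is best exhibited through explicit examples rather than established in this generality.
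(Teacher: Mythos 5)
Your proposal is correct and is essentially the paper's own argument: the paper likewise deduces the bound from Theorem \ref{theoremmain2} via the transfer isomorphism $H^{i}(C_{k}(M);\mathbb{Q})\approx H^{i}(F_{k}(M);\mathbb{Q})^{S_{k}}$, so that non-vanishing of $H^{(d-1)k+1}(C_{k}(M);\mathbb{Q})$ forces $H^{(d-1)k+1}(F_{k}(M);\mathbb{Q})\neq 0$. The only cosmetic difference is that you justify injectivity of $\pi^{*}$ through the identity $\tau\circ\pi^{*}=|S_{k}|\cdot\mathrm{id}$, whereas the paper directly quotes the invariant-subspace description of the quotient's cohomology.
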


\textit{Proof of corollaries \ref{cormain1} and \ref{cormain2}.}
The action of symmetric group $S_{k}$ on $F_{k}(M)$ induces an action on the cohomology group $H^{i}(F_{k}(M)),$ and the (transfer) map for the finite cover $F_{k}(M)\rightarrow C_{k}(M)$ gives an isomorphism $$H^{i}(C_{k}(M);\mathbb{Q})\approx H^{i}(F_{k}(M);\mathbb{Q})^{S_{k}}$$
between the $S_{k}-$invariant part of the cohomology of the cover and the cohomology of the quotient. The cohomology group $H^{i}(C_{k}(M);\mathbb{Q})$ is obtained by the trivial representation of $H^{i}(F_{k}(M);\mathbb{Q})$ (for more details see \cite{C}). It means that if $H^{i}(C_{k}(M);\mathbb{Q})\neq0$ then $H^{i}(F_{k}(M);\mathbb{Q})$ is also not zero. Hence, from Theorems \ref{theoremmain1} and \ref{theoremmain2}, we get the required lower bounds.
$\hfill \square$\\

\begin{remark} 
For a closed orientable surface $S$ different form sphere, we have $$Chdim(F_{k}(S))=k+1.$$
For more details see Theorem 2.1 of \cite{GGM}. In this sense the lower bound in corollary \ref{cormain2} is sharp. It seems that the lower bound in corollary \ref{cormain1} and corollary \ref{cormain2} are the precise formulas for the cohomological dimensions of ordered configuration spaces.
\end{remark}

\noindent\textbf{Acknowledgement}\textit{.} The author gratefully acknowledge the support from the ASSMS, GC university Lahore. This research is partially supported by Higher Education Commission of Pakistan.

\vskip 0,65 true cm




\vskip 0,65 true cm







\null\hfill  Abdus Salam School of Mathematical Sciences,\\
\null\hfill  GC University Lahore, Pakistan. \\\\
\null\hfill Al-Wadood Institute of professional studies, Islamabad.\\
\null\hfill E-mail: {yameen99khan@gmail.com}

\end{document}